\newcounter{theorem}
\def\openthm#1#2{\refstepcounter{theorem}\bigskip

{\noindent\bf#1~\thetheorem\if#2!{. }\else{ (#2).}\fi}
\it}
\def\thmskip{}
\newenvironment{theorem}[1][!]{\openthm{Theorem}{#1}}{\thmskip}
\newenvironment{lemma}[1][!]{\openthm{Lemma}{#1}}{\thmskip}
\newenvironment{proposition}[1][!]{\openthm{Proposition}{#1}}{\thmskip}
\newcounter{remark}
\def\openrem#1#2{\refstepcounter{remark}\bigskip
{\noindent \it \bfseries#1~\theremark\if#2!{. }\else{ (#2). }\fi}}
\newenvironment{remark}[1][!]{\openrem{Remark}{#1}}{\qed}
\newcounter{algorithm}
\def\openalg#1#2{\refstepcounter{algorithm}\bigskip
{\noindent \it \bfseries#1~\thealgorithm\if#2!{. }\else{ (#2). }\fi}}
\newenvironment{algorithm}[1][!]{\openalg{Algorithm}{#1}}{\qed}
\newcounter{definition}
\def\opendef#1#2{\refstepcounter{definition}\bigskip
{\noindent \bf#1~\thedefinition\if#2!{. }\else{ (#2). }\fi}\it}
\def\R{\mathbb{R}}
\def\N{\mathbb{N}}
\def\Z{\mathbb{Z}}
\def\dx{\,{\rm d}x}
\def\<{\langle}
\def\>{\rangle}
\def\argmin{{\rm argmin}}
\def\eps{\varepsilon}
\def\Bs{\mathcal{B}}
\def\Es{\mathcal{E}}
\def\Etot{E}
\def\Us{\mathcal{U}}
\def\Ts{\mathcal{T}}
\def\Ps{\mathcal{P}}
\def\Ys{\mathcal{Y}}
\def\Ks{\mathcal{K}}
\def\a{{\rm a}}
\def\qc{{\rm qc}}
\def\c{{\rm c}}
\renewcommand{\cases}[1]{\left\{ \begin{array}{rl} #1 \end{array} \right.}
\newcommand{\smfrac}[2]{{\textstyle \frac{#1}{#2}}}
\numberwithin{equation}{section}
\begin{document}
%\doublespacing

\title[A Posteriori Error Control for a QC Approximation]{A Posteriori
  Error Control for a Quasicontinuum Approximation of a Periodic
  Chain}

\author{Christoph Ortner}
\address{Christoph Ortner, Mathematics Institute, Zeemen Building,
  Universty of Warwick, Coventry CV4 7AL, UK}
\email{c.ortner@warwick.ac.uk}

\author{Hao Wang}
\address{Hao Wang, Oxford University Mathematical Institute,
  24-29 St Giles', Oxford, OX1 3LB, UK}
\email{wangh@maths.ox.ac.uk}

\date{\today}

\thanks{CO was supported by an EPSRC Grant ``Analysis of
  Atomistic-to-Continuum Coupling Methods''. HW was supported by China Scholarship Council and University of Oxford's 
``Chinese Ministry of Education-University of Oxford Scholarship'' and Sichuan University's National high Level University Construction Programme}

\begin{abstract}
  We consider a 1D periodic atomistic model, for which we formulate
  and analyze an adaptive variant of a quasicontinuum method. We
  establish a posteriori error estimates for the energy norm and for
  the energy, based on a posteriori residual and stability
  estimates. We formulate adaptive mesh refinement algorithms based on
  these error estimators. Our numerical experiments indicate optimal
  convergence rates of these algorithms.
\end{abstract}

\maketitle

\section{Introduction}
Quasicontinuum (QC) methods, or more generally, atomistic-to-continuum
coupling (a/c) methods, are a class of multiscale methods for coupling
an atomistic model of a solid with a continuum model. These methods
have been widely employed in atomistic simulations, where a fully
atomistic model would result in prohibitive computational cost but is
required in certain regions of interest (typically neighbourhoods of
crystal defects) to achieve the desired degree of accuracy. A
continuum model is applied to reduce the cost of the computation of
the elastic far-fields. We refer to the recent review articles
\cite{LuOr:Encyclopedia,Miller2009a} for introductions to QC methods.

In this paper we present an a posteriori error analysis of a simplified
variant of the energy-based QC approximation proposed in
\cite{Shapeev2010a}.

% (earlier variantsfor second neighbour
% interactions and extended to finite-range interactions in
% \cite{Ortner:arXiv:0911.0671,Shapeev2010a,LiLuskin2011a}.

% A number of QC methods have been developed in the past decades and are
% classified in two groups: energy-based coupling methods and
% force-based coupling methods. Despite the fact that the force-based
% methods are easy to implement and extend to higher dimensional cases,
% energy-based methods have certain advantages. For example, the forces
% derived from an energy potential are conservative which could leads to
% a faster convergence rate in computation, and the energy of an
% atomistic system can also be a quantity of interest in real
% application. However, consistent energy-based coupling methods can be
% tedious and restrictive on the shape of the coupling interface in more
% than one dimension (see \cite{Shimokawa:2004, E:2005a}) and it was not
% until recent that a practical consistent energy-based coupling method
% was created by Shapeev \cite{Shapeev2010a}, which is the Consistent
% Energy-Based QC Coupling method that we analyze in the present paper.

Considerable effort has been devoted to the a priori error analysis of
QC methods \cite{Lin:2003a, Lin:2007a, Ortner:2008a, Dobson:2009a,
  Dobson:2009b, Ortner:arXiv:0911.0671, OrtShap2010b,
  VanKotenLuskin2010a, Ortner2011a, Ming2009, LiLuskin2011a}. A
posteriori error control has received comparatively little attention:
Arndt and Luskin use a goal-oriented approach for the a posteriori
error estimates for the QC approximation of a Frenkel-Kontorova model
\cite{Arndt2008a, Arndt2008b, Arndt2007a}. The estimates are used to
optimize the choice of the atomistic region and the finite element
mesh in the continuum region. Prudhomme et al. \cite{Serge2007a} also
use the goal-oriented approach to provide a posteriori error control
for the original energy-based QC approximation \cite{Ortiz:1995a},
which is inconsistent. Both of these approaches require the use of the
solutions of dual problems. In \cite{Ortner:2008a} a posteriori error
bounds for an energy norm are derived, using a similar approach as in
the present work. However, the QC method analyzed in
\cite{Ortner:2008a} does not contain an approximation of the stored
energy and its 2D/3D version is therefore not a computationally
efficient method.

In the present work, we use the residual-based approach \cite[Chapter
2]{VerfurthAPosteriori}, which is well established in the finite
element approximation of partial differential equations, to derive a
posteriori error bounds for the energy-norm and for the energy
itself. What distinguishes our setting from the classical one, are two
particular features: 1. the model approximation (``variational
crime'') is fundamentally different than quadrature approximations;
2. it is possible to adapt the mesh to a degree that the residual
vanishes exactly in certain regions of interest (the atomistic
region). Our work extends \cite{Ortner:arXiv:0911.0671}, which
considers a simplified setting.

We do not tackle the question of model and mesh adaptivity for defect
nucleation, but focus only on automatically choosing the {\em size} of
the atomistic region and the finite element mesh in the continuum
region. In many applications, a defect is inserted into the crystal
before the computation, or it is known a priori where a defect will
nucleate. An extension of our work to include defect nucleation would
be valuable, but cannot be pursued in the simplified 1D setting we are
considering here.

\subsection{Outline}
In Section \ref{Sec:A_and_Q_Model} we introduce a 1D atomistic model
problem, which mimics the behaviour of crystal defects in
2D/3D. Moreover, we review the construction of a QC method to
efficiently approximate its solutions, and introduce the notation that
will be used throughout the paper.

In Section \ref{Sec:Residual_Analysis}, we derive a residual estimate
for the QC method in a discrete negative Sobolev norm.  In Section
\ref{Sec:Stability}, we present an a posteriori stability result.  In
Section \ref{Sec:A_Posteriori_Error}, we combine the residual estimate
and the stability result to give a posteriori error estimates for the
deformation gradient and for the energy.

In Section \ref{Numerics}, we describe three mesh refinement
algorithms based on our a posteriori error analysis and on previous a
priori error estimates, and present a numerical example to illustrate
the performance of these algorithms.

\section{Model Problem and QC Approximation}
\label{Sec:A_and_Q_Model}
\subsection{Atomistic Model}
\label{subsec:AModel}
% As opposed to taking only a discrete point of view in many QC researches, we use both continuous
% functions and discretized vectors to denote the displacement and
% the deformation. The reason for doing this is that the
% Consistent Energy-Based QC coupling method, which we analyze in this paper,
% is easily formulated through the continuous approach, while discrete
% formulations could make the residual analysis of the external forces
% much easier.
%
Following previous works \cite{Dobson:2009a, Ortner:arXiv:0911.0671,
  Wang:2011a} we formulate a model problem in a discrete periodic
domain containing $2N$ atoms, where $N \in \mathbb{N}$.  Let $F > 0$
denote a macroscopic stretch and $\eps = 1/(2N)$ the lattice spacing,
both of which we fix throughout. We define the displacement and
deformation spaces, respectively, by
\begin{align*}
  % \label{Def:ADSpaceDisplace}
  \Us^\eps :=~& \big\{ u \in \R^\Z: u_{\ell+2N} = u_\ell, \text{ and }
  u_0 = 0 \big\}, \quad \text{and} \\
 %
  % \label{Def:ADSpaceDeform}
  \Ys^\eps :=~&  \big\{ y \in \R^\Z: y_{\ell+2N} = F +
  y_\ell, \text{ and } y_0 = 0 \big\}.
\end{align*}
In particular, we observe that $y \in \Ys^\eps$ if and only if $y =
F x + u$ for some $u \in \Us^\eps$. 

The {\em stored energy} (per period) of an admissible
deformation $y \in \Ys^\eps$ is given by
\begin{displaymath}
\Es_\a(y) := \eps \sum_{\ell=-N+1}^N
\phi(y_\ell') +  \eps \sum_{\ell=-N+1}^N
\phi(y_{\ell-1}' + y_{\ell}'),  
% \label{Eq:AStoredEnergy}
\end{displaymath}
where $y_\ell' := \eps^{-1}(y_\ell-y_{\ell-1})$ and we note that
$y_{\ell-1}' + y_\ell' = \eps^{-1} (y_\ell-y_{\ell-2})$ is a
second-neighbour difference, and where $\phi \in C^3(0,+\infty)$ is a
Lennard-Jones type interaction potential. We assume throughout that
there exists $r_\ast >0$ such that $\phi$ is convex in $(0, r_\ast)$
and concave in $(r_\ast, +\infty)$.

Given a periodic dead load $f \in C^0(\R), f(x+1) = f(x)$, we define
the {\em external energy} (per period) by
\begin{equation}
  \label{eq:ext_energy}
  - \<f, u\>_\eps := - \eps \sum_{\ell=-N+1}^N  f_\ell u_\ell, 
\end{equation}
where $u = y - F x$ .  Thus, the {\em total energy} (per period) under
a deformation $y \in \Ys^\eps$ is given by
\begin{displaymath}
  \Etot_\a(y) := \Es_\a(y) - \<f, y - F x\>_\eps.
\end{displaymath}
We wish to compute
\begin{equation}
  y_\a \in \argmin \Etot_\a(\Ys^\eps),
  \label{Eq:LocalMinimaA}
\end{equation}
where $\argmin$ denotes the set of local minimizers.

\begin{remark}
  1. Since the internal energy is translation invariant, our choice
  $u_0 = 0$ (instead of the more common constraint $\sum_{\ell =
    -N+1}^N u_\ell = 0$) does not alter the problem but simplifies the
  treatment of external forces in Sections
  \ref{SectionResidualStoredEnergy} and \ref{sec:frc_est_sing}.

  2. The external energy \eqref{eq:ext_energy} can be thought of as the
  linearisation of a nonlinear external energy about the state $Fx$.

  3. We have included the external forces primarily to render the problem
  non-trivial. In realistic applications in 2D/3D, defects or forces
  applied on a boundary are the cause of deformation of the crystal,
  however, in 1D such complex behaviour cannot be described directly.
\end{remark}

\subsection{Notation for lattice functions}
\label{sec:notation_latticefcns}
Throughout, we identify a lattice function $v \in \R^\Z$ with its
continuous and piecewise affine interpolant with nodal values $v(\eps
\ell) = v_\ell$. Vice-versa, if $g \in C^0(\R)$ then we always
identify it with its vectors of nodal values $(g_\ell)_{\ell \in \Z} =
(g(\eps\ell))_{\ell \in \Z}$.

Let $\mathcal{D}$ be a subset of $\Z$. For a vector $v \in \R^\Z$, we define
\begin{align*}
\Vert v \Vert_{\ell^{p}_{\eps}(\mathcal{D})} :=  \left\{
\begin{array}{l l}
\Big(\sum_{\ell \in \mathcal{D}} \eps |v_\ell|^p
\Big)^{1/p},  & 1 \le p < \infty,\\
\max_{\ell \in \mathcal{D}} |v_\ell|,  & p = \infty.
\end{array} \right.
\end{align*}
If the label $\mathcal{D}$ is omitted, then we understand this to mean
$\mathcal{D} = \{-N+1, \dots, N\}$.

We define the first discrete derivatives $v_\ell' := (v_\ell -
v_{\ell-1}) / \eps$.  It is then straightforward to see that, if at
the same time we identify $v'$ with the pointwise derivative, then $\|
v' \|_{L^p(-1/2, 1/2)} = \Vert v' \Vert _{\ell^{p}_{\eps}}$.

We equip the space $\Us^\eps$ with the discrete Sobolev norm (recall
that $u_0 = 0$)
\begin{displaymath}
  \|v\|_{\Us^{1,2}} := \Vert v' \Vert _{L^2(-1/2, 1/2)} \quad \text{ for }
  v \in \Us^\eps.
\end{displaymath}
The norm on the dual $(\Us^\eps)^*$ is defined by
\begin{displaymath}
  \| T \|_{\Us^{-1,2}} := \sup_{\substack{v \in \Us^\eps \\ \| v
    \|_{\Us^{1,2}}=1 }} T[v].
\end{displaymath}

\def\per{\#}

\subsection{Finite element notation}
\label{sec:notation}
To construct the QC approximation in the next section, we first define
some convenient notation. Let $\Omega := [-1/2, 1/2]$ denote the
computational cell. We choose an atomistic region $\Omega_\a \subset
\Omega$, where atomistic accuracy is required, and we define the
continuum region by $\Omega_\c := \Omega \setminus \Omega_\a$. We will
also use the periodic extension of $\Omega_\c$, denoted by
$\Omega_\c^\per := (\Omega_\c + \Z)$.

We assume throughout that $\Omega_\a$ is an open interval $(L_\a,
R_\a)$ with $-1/2 < L_\a < R_\a < 1/2$. All our results (with the
exception of Section \ref{sec:frc_est_sing}) can be extended without
difficulty to the case when $\Omega_\a$ consists of a finite union of
open intervals.

Let $\{T_k^{h}\}_{k \in \Z}$ be a partition of $\R$ into closed
intervals with $T_k^{h} = [x^{h}_{k-1}, x^{h}_{k}]$, where $x^{h}_k >
x^{h}_{k-1}$ are the nodes of the partition. We assume, without loss
of generality, that $x_1^h$ is the left-most node and $x_K^h$ the
right-most nodes in the interval $(-1/2, 1/2]$.

The length of an element is denoted by $h_k := |T_k^{h}| := x^{h}_k -
x^{h}_{k-1}$. The space of continuous piecewise affine functions with
respect to the partition $\Ts^h$ is denoted by $\Ps_1(\Ts^h)$.

We assume throughout that the partition $\Ts^h$ has the following
properties:
\begin{itemize}
\item[(T1)] $\Ts^h$ is periodic: there exists $K \in \N$ such that
  $x_{k+K}^h = 1 + x_{k}^h$ for all $k \in \Z$.
\item[(T2)] $\Ts^h$ has atomistic resolution in $\Omega_\a$:
  $\Omega_\a \cap \eps \Z = \Omega_\a \cap \{ x_k^h \}_{k \in \Z}$.
\item[(T3)] The a/c interface points are finite
  element nodes: $\partial \Omega_\a \subset \{x^h_k\}_{k \in \Z}$. In
  particular, each element belongs entirely to either the atomistic or
  continuum region.
\item[(T4)] If $T^h_k \subset \Omega_\c^\per$ then $|T_k^h| = h_k \ge
  2 \eps$.
\end{itemize}
Property (T4) is not strictly required, but simplifies the analysis
and is not a significant restriction. Note also that we have not
required (except in the atomistic region) that finite element nodes
must be positioned on atomic sites. Although not necessary in 1D, it
somewhat simplifies mesh generation, and to some extend mimics the
fact that element edges or faces in 2D/3D cannot normally be aligned
with the underlying crystal lattice.

The finite element displacement and deformation spaces are defined,
respectively, by
\begin{align}
  \label{Def:QCDSpaceDisplace}
  \Us^h :=~& \big\{u_h \in \Ps_1(\Ts^h) : u_h(x+1) = u_h(x) \text{ and
  } u_h(0) = 0 \big\}, \quad 
  \text{and} \\ 
  \label{Def:QCDSpaceDeform}
  \Ys^h :=~& \big\{y_h \in \Ps_1(\Ts^h) : y_h - F x \in \Us^h \big\}.
\end{align}

For $g \in C^0(\R)$, we define the interpolation operator $I_h :
C^0(\R) \rightarrow \Ps_1(\Ts^h)$ by
\begin{equation}
  \label{Eq:CLinNodInterp}
  (I_h g)(x^h_k) := g(x^h_k) \qquad \forall k \in \Z.
\end{equation}
We note that $I_h : \Us^\eps \to \Us^h$. 

For future reference, we also define the micro-elements $T_\ell^\eps
:= ((\ell-1)\eps, \ell \eps)$ for $\ell \in \Z$. Analogously, we
define $I_\eps$ to be the nodal interpolant with respect to the
atomistic grid. We will require this interpolant since the mesh nodes
$\{x_k^h\}_{k \in\Z}$ do not necessarily coincide with lattice sites.

\subsection{QC Approximation}
\label{subsec:QCApprox}
The QC approximation we analyze in this paper is the 1D variant of the
ACC method described in~\cite{Shapeev2010a}. (An earlier variant of
the idea was described in \cite{Ortner:arXiv:0911.0671} and a similar
construction in \cite{LiLuskin2011a}. We focus on the formulation
proposed in \cite{Shapeev2010a} since it can be readily generalised to
2D.)

The idea of the ACC method is based on the splitting of interaction
bonds. A bond is an open interval $b = (\ell\eps, (\ell+j)\eps)$ for
$\ell \in \Z$ and $j \in \{1,2\}$ (since we consider only first and
second neighbour interactions). Since our computational domain is
$(0,1]$ the set of bonds over which the atomistic energy is defined is
given by
\begin{equation}
  \Bs:= \big\{ (\ell\eps, (\ell+j)\eps): j = 1,2; \ell =
  -N+1, \ldots, N \big\}.
\end{equation}
For each bond $b = (\ell\eps, (\ell+j)\eps)$ we define $r_b := j$.

For any open interval $\omega = (L_\omega, R_\omega)$ (e.g., for a
bond) with length $|\omega| := R_\omega - L_\omega > 0$ we define the
finite difference operator
\begin{equation}
  D_\omega v := \frac{v(R_\omega)-v(L_\omega)}{|\omega|} \qquad
  \text{for } v \in C^0(\R).
\end{equation}
Note that, with this notation, we can rewrite $\Es_\a(y) = \eps
\sum_{b \in \Bs} \phi\big( r_b D_b y \big)$.

For each bond $b$ and deformation field $y \in W^{1,\infty}(\R)$ we
define its atomistic and continuum energy contributions to the stored
energy, respectively, by
\begin{align*}
  a_b(y) := \frac{|b \cap \Omega_\a|}{|b|} \phi\big(r_b D_{b \cap
    \Omega_\a} y \big) \quad \text{and} \quad 
  c_b(y) := \frac{1}{|b|} \int_{b \cap \Omega_\c^\per} \phi\big(r_by'(x)\big)\dx.
\end{align*}
If $|b \cap \Omega_\a| = 0$ then $D_{b \cap \Omega_\a} y$ is
ill-defined; in that case we define $a_b \equiv 0$. The QC energy (the
ACC energy of \cite{Shapeev2010a}) of a deformation field $y \in
W^{1,\infty}(\R)$ is now defined by
\begin{equation}
  \Es_{\qc}(y) := \eps\sum_{b \in \Bs} \big[ a_b(y)+c_b(y) \big].
  \label{QCStoredEnergy-1}
\end{equation}
The key property of this definition is that it satisfies the patch
test \cite[Section 3.3]{Shapeev2010a},
\begin{equation}
  \Es'_{\a}(Fx)[v_\eps] = \Es'_{\qc}(Fx)[v_h] = 0 \quad \text{for all }  v_\eps \in
  \Us^\eps \text{ and } v_h \in \Us^h, \quad \forall F > 0.
\end{equation}

The {\em external energy} (per period) is given by
\begin{equation}
  - \<f,u_h\>_h :=  - \int_\Omega I_h (f u_h) \dx. 
\end{equation}
Note that, in the atomistic region, this reduces to the same form as
\eqref{eq:ext_energy}. The {\em total energy} (per period) of a
deformation $y_h \in \Ys^h$ is then given by
\begin{displaymath}
  \Etot_{\qc}(y_h) := \Es_{\qc}(y_h) - \<f, y_h-F x\>_h.
\end{displaymath}
In the QC approximation we seek
\begin{equation}
  \label{LocalMinimumQC}
  y_{\qc} \in \argmin \,\Etot_{\qc}(\Ys^h).
\end{equation}

\begin{remark}
  It is initially not obvious why the formulation
  \eqref{QCStoredEnergy-1} should reduce the complexity of the
  computation of $y_\qc$ over that of $y_\a$, since $\Es_\qc$ is still
  written as a summation over all bonds. However, one can readily
  check (see \cite{Shapeev2010a} for the details), that
  \begin{displaymath}
    \Es_{\qc}(y_h) = \sum_{b \in \Bs} a_b(y_h) + \int_{\Omega_\c} W(y_h') \dx,
  \end{displaymath}
  where $W(r) := \phi(r) + \phi(2r)$ is the Cauchy--Born stored energy
  function. This formulation requires only a sum over all bonds within
  the atomistic region, and a standard finite element assembly
  procedure to compute the energy contribution from the continuum
  region. Thus, the evaluation of $\Es_\qc$ has only a computational
  complexity proportional to $\#\Ts^h$.

  Analogous results hold in 2D \cite{Shapeev2010a}; a more involved
  assembly procedure is required to make the ACC method efficient in
  3D \cite{Shapeev_3D}.
\end{remark}

\section{Residual Analysis}
\label{Sec:Residual_Analysis}

\subsection{The residual}
Let $y_\a$ be a solution of the atomistic problem
\eqref{Eq:LocalMinimaA}. It is straightforward to see that, if
$y_\a'>0$, then $\Es_{\a}$ is differentiable at $y_\a$ and hence the
first order optimality condition for \eqref{Eq:LocalMinimaA} is
satisfied:
\begin{align}
  \label{VariationalA}
&   \Es'_\a(y_\a)[v] = \<f,v\>_\eps \qquad \forall v \in \Us^\eps, \\
  \label{VariationalAStore}
  & \text{where} \quad \Es'_\a(y_\a)[v] = \eps \sum_{b
    \in \Bs} \phi'(r_b D_b y_\a) r_bD_b v.
\end{align} 

Similarly, if $y_{\qc}$ is a solution of the QC problem
\eqref{LocalMinimumQC} with $y_{\qc}' > 0$ on $[-1/2,1/2]$, then it
satisfies the corresponding first order optimality condition
\begin{align}
\label{VariationalQC}
& \hspace{-3cm}  \Es'_{\qc}(y_{\qc})[v_h] = \<f,v_h\>_h \qquad \forall v_h \in \Us^h,
\\
\label{VariationalQCStore}
\text{where} \quad 
\Es'_{\qc}(y_h)[v_h] &= \sum_{b \in \Bs} \big(
a'_b(y_h)[v_h]+c_b'(y_h)[v_h] \big) \\
\nonumber
&= \sum_{b \in \Bs} |b\cap \Omega_\a| \phi'(r_b D_{b \cap \Omega_\a} y_h) D_{b \cap
  \Omega_\a} v_h  + \sum_{b \in \Bs} \int_{b \cap
  \Omega_\c^\per} \phi'({r_b y'_h(x)}) v_h' \dx.
\end{align}

% In order to use Galerkin orthogonality we will use the modified
% interpolation operator $J_h$: if $u \in \Us^\eps$, then $J_h u \in
% \Us^h$ and hence it is an admissible test function for
% \eqref{VariationalQC}; and vice versa. Abusing notation, we also
% define
% \begin{align}
%   \label{definitionJhy}
%   J_\eps y_h :=~& Fx + J_\eps(y_h - Fx), \quad \text{for } y_h \in
%   \Ys^h.
% \end{align}

Let $y_{\qc} \in \Ys^h$ be a solution of \eqref{LocalMinimumQC}, then
we define its residual $R \in (\Us^\eps)^*$ by
\begin{align}
  \label{definitionTotalResidual}
  R[v] := E'_{\a}(y_{\qc})[v],
\end{align}
% where we write $I_\eps y_\qc$ only to emphasize that the argument is
% now understood as an atomistic displacement. However, we remark for
% future reference, that $\Es_{\a}'(I_\eps y_h) = \Es_{\a}'(y_h)$ for
% all $y_h \in \Ys^h$, since $D_b I_\eps y_h = {D_b y_h}$ for all $b \in
% \Bs$.
%
Using
\eqref{VariationalQC} we can rewrite this as
\begin{align*}
   R[v] =~& E_{\a}'(y_{\qc})[v] - E'_{\qc}(y_{\qc})[I_h v] \nonumber\\
       % &= \big[\Es'_{\a} (J_\eps y_{\qc})[v] - \<f,v\>_\eps\big]
       % -\big[\Es'_{\qc}(y_{\qc})[J_h v]-\<f,J_h v\>_h\big]
       % \nonumber\\
       =~&  \big\{\Es'_{\a} (y_{\qc})[v]-\Es'_{\qc}(y_{\qc})[I_h
       v]\big\} 
       - \big\{ \<f,v\>_\eps - \<f, I_h v\>_h \big\}  \notag \\
       =:~& R_{\rm int}[v] + R_{\rm ext}[v],
\end{align*}
where we call $R_{\rm int}$ the internal residual and $R_{\rm ext}$
the external residual. We will bound them separately in the next two
sections.

\subsection{Estimate for the internal residual}
\label{SectionResidualStoredEnergy}
In this section, we analyze the internal residual $R_{\rm int}$. To
state the main theorem, we define the index set of all nodes in
the continuum region and a/c interface (recall that $\Omega_\c$ is
closed),
\begin{equation}
  \label{Set:K_c}
  \Ks_\c := \big\{ k \in \{1,\ldots,K\} : x_k^h \in \Omega_\c \big\}.
  % T_k^h \subset \Omega_\c \text{ and } 
  % T_k^h \cap (1,\infty) = \emptyset \big\}.
\end{equation}

\begin{theorem}
  \label{Theo:ResidualStoredEnergy}
  Let $y_h \in \Ys^h$ such that $y_h' > 0$ and $R_{\rm int}[v] =
  \Es'_{\a}(y_h)[v] - \Es'_{\qc}(y_h)[I_h v]$, then
  \begin{align}
    \label{ResidualStoredEnergy}
    & \big\| R_{\rm int} \big\|_{\Us^{-1,2}} \le
    \Big( 3 \sum_{k \in \mathcal{K}_\c} \eta_k^2 \Big)^{\frac{1}{2}}
    =: \eta(y_h), \qquad \text{where} \\
    %
    % \label{eq:residual:defn_rhok}
    \notag
    & \eta_k^2 := \!\!\!\sum_{\substack{b \in \Bs \\ x_k^h \in {\rm int}(b)}}
      \Big( \big\|  \phi'(r_b D_b y_h)
      - \phi'(r_b D_{b \cap \Omega_\a} y_h) \big\|_{L^2(b \cap
     \Omega_\a)}^2 
   + \big\| \phi'(r_b D_b y_h)
   - \phi'(r_b y_h') \big\|_{L^2(b\cap \Omega_\c^\per)}^2 \Big).
  \end{align}
\end{theorem}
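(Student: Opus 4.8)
The plan is to recast the internal residual in the ``primal'' form $R_{\rm int}[v] = \int_{-1/2}^{1/2}\Psi\,v'\dx$ for a suitable piecewise constant ``stress defect'' $\Psi\in L^2$, to estimate $\|R_{\rm int}\|_{\Us^{-1,2}}\le\|\Psi\|_{L^2(-1/2,1/2)}$ directly from the definition of the dual norm, and then to bound $\|\Psi\|_{L^2}$ by the local quantities $\eta_k$ via a bond-counting argument.

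The first step is to write both $\Es'_\a(y_h)[v]$ and $\Es'_\qc(y_h)[I_h v]$ as bond-wise integrals of $v'$. Since $|b| = r_b\eps$, one has $\eps\,r_b D_b v = v(R_b) - v(L_b) = \int_b v'\dx$, so \eqref{VariationalAStore} gives $\Es'_\a(y_h)[v] = \sum_{b\in\Bs}\phi'(r_b D_b y_h)\int_b v'\dx$. For the QC term, properties (T2) and (T3) guarantee that the two endpoints of $b\cap\Omega_\a$ are finite element nodes (atomic sites inside $\Omega_\a$, or a/c interface points), at which $I_h v$ agrees with $v$; hence $|b\cap\Omega_\a|\,D_{b\cap\Omega_\a}(I_h v) = \int_{b\cap\Omega_\a}v'\dx$, and \eqref{VariationalQCStore} becomes $\Es'_\qc(y_h)[I_h v] = \sum_b\phi'(r_b D_{b\cap\Omega_\a}y_h)\int_{b\cap\Omega_\a}v'\dx + \sum_b\int_{b\cap\Omega_\c^\per}\phi'(r_b y_h')(I_h v)'\dx$. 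Splitting $\int_b = \int_{b\cap\Omega_\a}+\int_{b\cap\Omega_\c^\per}$ on the atomistic side and adding and subtracting the auxiliary terms $\sum_b\int_{b\cap\Omega_\c^\per}\phi'(r_b D_b y_h)v'\dx$ and $\sum_b\int_{b\cap\Omega_\c^\per}\phi'(r_b y_h')v'\dx$, I obtain $R_{\rm int}[v] = R_1[v] + R_2[v] + R_3[v]$ with
\begin{align*}
  R_1[v] &= \sum_b\big(\phi'(r_b D_b y_h) - \phi'(r_b D_{b\cap\Omega_\a}y_h)\big)\int_{b\cap\Omega_\a}v'\dx, \\
  R_2[v] &= \sum_b\int_{b\cap\Omega_\c^\per}\big(\phi'(r_b D_b y_h) - \phi'(r_b y_h')\big)v'\dx, \\
  R_3[v] &= \sum_b\int_{b\cap\Omega_\c^\per}\phi'(r_b y_h')\big(v' - (I_h v)'\big)\dx.
\end{align*}

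The crucial observation is that $R_3[v] = 0$. Interchanging the bond sum with the integral, the multiplier of $v' - (I_h v)'$ at a point $x\in\Omega_\c^\per$ is $\sum_{b\ni x}\phi'(r_b y_h'(x))$; since exactly one first-neighbour bond and two second-neighbour bonds contain $x$, this equals $\phi'(y_h'(x)) + 2\phi'(2y_h'(x)) = W'(y_h'(x))$, the Cauchy--Born stress, which is piecewise constant on $\Ts^h$ because $y_h'$ is. By (T3), each element lies entirely in $\Omega_\a$ or in $\Omega_\c^\per$, so $R_3[v]$ collapses to a sum over the continuum elements $T$ of $W'(y_h'|_T)\int_T(v' - (I_h v)')\dx$, and each of these integrals vanishes because $I_h v$ interpolates $v$ at the endpoints of $T$. (This is precisely where the patch-test/consistency structure of the ACC energy enters.) Consequently $R_{\rm int}[v] = R_1[v] + R_2[v] = \int_{-1/2}^{1/2}\Psi\,v'\dx$, where $\Psi(x) := \sum_{b\ni x}\psi_b(x)$ and $\psi_b$ is the bond stress defect, equal to the constant $\phi'(r_b D_b y_h) - \phi'(r_b D_{b\cap\Omega_\a}y_h)$ on $b\cap\Omega_\a$ and to $\phi'(r_b D_b y_h) - \phi'(r_b y_h')$ on $b\cap\Omega_\c^\per$.

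It remains to bound $\|\Psi\|_{L^2}$. By Cauchy--Schwarz and the definition of the dual norm, $\|R_{\rm int}\|_{\Us^{-1,2}}\le\|\Psi\|_{L^2(-1/2,1/2)}$, and since at most three bonds contain any given point, $\|\Psi\|_{L^2}^2\le 3\sum_b\|\psi_b\|_{L^2(b)}^2$; here $\|\psi_b\|_{L^2(b)}^2$ is exactly the bracketed quantity in the definition of $\eta_k^2$. Finally, I would check that $\psi_b\equiv 0$ unless ${\rm int}(b)$ contains some node $x_k^h$ with $k\in\Ks_\c$: if it does not, then by (T3) the bond $b$ does not straddle the a/c interface, so either ${\rm int}(b)\subset\Omega_\a$, whence $D_{b\cap\Omega_\a}y_h = D_b y_h$ and $b\cap\Omega_\c^\per = \emptyset$, or ${\rm int}(b)$ contains no mesh node at all and $b$ lies inside a single closed continuum element (using (T4), $|b|\le 2\eps\le h_k$), on which $y_h$ is affine so that $y_h' = D_b y_h$ pointwise; in either case $\psi_b = 0$. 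Thus every bond with $\psi_b\not\equiv 0$ appears at least once in $\sum_{k\in\Ks_\c}\eta_k^2$, so $\sum_b\|\psi_b\|_{L^2(b)}^2\le\sum_{k\in\Ks_\c}\eta_k^2$ and hence $\|R_{\rm int}\|_{\Us^{-1,2}}^2\le 3\sum_{k\in\Ks_\c}\eta_k^2 = \eta(y_h)^2$. I expect the main obstacle to be the identity $R_3[v] = 0$ together with this last case distinction, since both rely on carefully tracking the partial bonds at the a/c interface and on the consistency structure of the ACC energy; the remaining manipulations (the rewriting in terms of $v'$, the Cauchy--Schwarz step, and the factor $3$) are routine.
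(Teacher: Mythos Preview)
Your proof is correct and follows essentially the same route as the paper: split the internal residual into a ``stress defect'' part and a coarsening part, show the coarsening part vanishes via the Cauchy--Born consistency of the ACC energy (your $R_3=0$ is exactly the paper's $R_3=0$), and then estimate the stress defect using the fact that at most three bonds overlap at any point. Your absorption of the identity $D_{b\cap\Omega_\a}(I_h v)=D_{b\cap\Omega_\a}v$ into the initial rewriting is precisely the paper's observation that its term $R_2$ vanishes, just applied one step earlier.

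The only noteworthy difference is in the final estimation. The paper bounds each $R_1^b[v]$ separately by Cauchy--Schwarz, then applies a second Cauchy--Schwarz over the sum in $k$, and finally bounds the resulting test-function term $\sum_{k\in\Ks_\c}\sum_{b\ni x_k^h}\|v'\|_{L^2(b)}^2$ by $3\|v'\|_{L^2}^2$ via the overlap count. You instead package $R_1+R_2$ as $\int\Psi\,v'\dx$, bound the dual norm directly by $\|\Psi\|_{L^2}$, and use a pointwise Cauchy--Schwarz $|\Psi(x)|^2\le 3\sum_{b\ni x}|\psi_b(x)|^2$ to reach the same inequality $\|\Psi\|_{L^2}^2\le 3\sum_b\|\psi_b\|_{L^2(b)}^2$. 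Both arguments extract the constant $3$ from the same overlap fact and yield the identical bound; your version is marginally more streamlined because it uses a single Cauchy--Schwarz step rather than three.
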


\begin{remark}
  1. The expressions for $\eta_k$ are reminiscent of the flux (or
  stress) jump terms that occur in the classical residual error
  analysis for partial differential equations. The origin in our case,
  is somewhat different however, and results only from the model
  approximation and not the finite element coarsening.

  2. With some additional work, the form of the estimates $\rho_k$ can
  be turned into element contributions and further simplified by
  computing more explicit representations.  
  % We carry out this
  % calculation in Appendix \ref{Sec:Concrete_Example_Gradient}.
\end{remark}

\begin{proof}
  Let $v_h := I_h v$. From
  \eqref{VariationalAStore} and \eqref{VariationalQCStore} we obtain
  \begin{align}
    \label{ResidualStore1}
    R_{\rm int}[v]
    =& \sum_{b
      \in \Bs} \Big\{ |b| \phi'(r_b D_b y_h) D_b v-
    |b\cap \Omega_\a| \phi'(r_b D_{b \cap \Omega_\a} y_h) D_{b \cap
      \Omega_\a} v_h   - \int_{b \cap
      \Omega_\c^\per} \phi'(r_b y_h') v_h' \dx \Big\}.
  \end{align}
  We subtract and add the terms
  \begin{displaymath}
    \sum_{b \in \Bs}|b\cap \Omega_\a| \phi'(r_b D_{b \cap \Omega_\a} y_h)  D_{b \cap
      \Omega_\a}v \quad \text{ and } \quad \sum_{b \in \Bs} \int_{b \cap
      \Omega_\c^\per} \phi'({r_b y'_h(x)}) v' \dx
  \end{displaymath}
  to split $R_{\rm int}$ into three components,
  \begin{align}
    \notag
    & \hspace{-1cm} R_{\rm int}[v] \\
    =~&\sum_{b
      \in \Bs} \bigg\{ |b| \phi'(r_b D_b y_h) D_b v -|b\cap \Omega_\a| \phi'(r_b D_{b \cap \Omega_\a} y_h)  D_{b \cap
      \Omega_\a} v  - \int_{b \cap
      \Omega_\c^\per} \phi'({r_b y'_h(x)}) v' \dx \bigg\} \nonumber\\
    \notag
    & +\sum_{b \in \Bs}
    |b\cap \Omega_\a| \phi'(r_b D_{b \cap \Omega_\a} y_h) \big(  D_{b
      \cap \Omega_\a} v - D_{b \cap \Omega_\a} v_h \big) \\
    \notag
    & + \sum_{b \in \Bs} \int_{b \cap
      \Omega_\c^\per} \phi'({r_b y'_h(x)}) \big(v' - v_h' \big) \dx \notag\\
    \label{ResidualStore2}
    =:~& R_1[v] + R_2[v] + R_3[v].
  \end{align}
  We will show that $R_2 \equiv R_3 \equiv 0$ and estimate $R_1$.

  For $R_2$ this follows simply from the fact that $v_h = v$ in
  $\Omega_\a$ and hence $D_{b \cap \Omega_\a} v = D_{b \cap \Omega_\a}
  v_h$ for all $b \in \Bs$ such that $|b \cap \Omega_\a| > 0$.

  For $R_3$, following the computations in \cite[Section
  3.2]{Shapeev2010a}, we obtain
  \begin{displaymath}
    R_3[v] = \int_{\Omega_\c} W'(y_h') (v' - v_h') \dx,
  \end{displaymath}
  where $W(r) = \phi(r)+\phi(2r)$.  Since $v(x_k^h) = v_h(x_k^h)$ for
  all $k \in \Z$ and since $W'(y_h')$ is constant on each element
  $T_k^h$ it follows that $R_3 \equiv 0$. 

  Finally, we turn to the analysis of $R_1 = \sum_{b \in \Bs} R_1^b$,
  where we define
  \begin{align*}
    % \label{ResidualStore2-1}
    R_1^b[v] :=~& |b| \phi'(r_b D_b  y_h) D_b v - |b\cap \Omega_\a| \phi'(r_b D_{b \cap \Omega_\a} y_h)  D_{b \cap
      \Omega_\a} v 
    - \int_{b \cap
      \Omega_\c^\per} \phi'({r_b y'_h(x)}) v' \dx.
  \end{align*}
  Using the fact that $|\omega| D_\omega v = \int_\omega v' \dx$ we obtain
  \begin{align*} 
    \notag 
    R_1^b[v] =~& \int_{b} \phi'(r_b D_b y_h) v' \dx 
    - \int_{b \cap \Omega_\a} \phi'(r_b D_{b \cap \Omega_\a} y_h) v' \dx
    - \int_{b \cap\Omega_\c^\per} \phi'({r_b y'_h(x)}) v' \dx \\
    %
    % \label{eq:residual_R1b_2}
    =~& \int_{b \cap \Omega_\a} \big[\phi'(r_b D_b y_h) - \phi'(r_b
    D_{b \cap \Omega_\a} y_h)\big] v' \dx 
    + \int_{b \cap\Omega_\c^\per} \big[ \phi'(r_b D_b y_h) - \phi'(r_b y_h')\big] v' \dx. 
 \end{align*}

 If $b \subset \Omega_\a$, then $b \cap \Omega_\a = b$ and $|b \cap
 \Omega_\c| = 0$, and hence $R_1^b = 0$. Similarly, if $b \subset T_k^h
 \subset \Omega_\c^\per$, then $D_b y_h = y_h'|_{T_k^h}$ and $|b \cap
 \Omega_\a| = 0$ and hence $R_1^b = 0$. Thus, we observe that only
 bonds crossing continuum element boundaries, or the
 atomistic/continuum interface, contribute to the residual. These are
 precisely the points contained in $\Ks_\c$. In
 particular, we obtain
 \begin{equation}
   \label{eq:residual_R1_2}
   R_1[v] = \sum_{k \in \Ks_\c} \sum_{\substack{b \in \Bs \\ x_k^h \in {\rm
         int}(b)}} R_1^b[v],
 \end{equation}
 where we used the fact that no bond can cross more than one point
 $x_k^h \in \Ks_\c$ due to our assumption that all elements have at
 least length $2\eps$.

 From the definition of $R_1^b$, and applying two Cauchy--Schwarz
 inequalities, it is straightforward to estimate
 \begin{align*}
   \big|R_1^b[v] \big| \leq \Big(\,& \big\| \phi'(r_b D_b y_h) - \phi'(r_b
    D_{b \cap \Omega_\a} y_h) \big\|_{L^2(b \cap \Omega_\a)}^2 \\
    & \qquad +
    \big\| \phi'(r_b D_b y_h) - \phi'(r_b y_h') \big\|_{L^2(b \cap
      \Omega_\c^\per)}^2 \Big)^{1/2} \| v' \|_{L^2(b)},
 \end{align*}
 and after applying another Cauchy--Schwarz inequality,
 \begin{align*}
   \Big|\sum_{\substack{b \in \Bs \\ x_k^h \in {\rm int}(b)}} R_1^b[v]
   \Big| \leq\,& \eta_k \,\Big( 
   \sum_{\substack{b \in \Bs \\ x_k^h \in {\rm int}(b)}} \| v'
   \|_{L^2(b)}^2 \Big)^{1/2},
 \end{align*}
 where $\eta_k$ is defined in \eqref{ResidualStoredEnergy}. 

 Combing our foregoing estimates, we arrive at
 \begin{align*}
   R_1[v] \leq\,& \sum_{k \in \Ks_\c} \eta_k \,\Big( 
   \sum_{\substack{b \in \Bs \\ x_k^h \in {\rm int}(b)}} \| v'
   \|_{L^2(b)}^2 \Big)^{1/2} \\
   \leq\,& \Big(\sum_{k \in \Ks_\c} \eta_k^2 \Big)^{1/2}\, \Big(
   \sum_{k \in \Ks_\c}  \sum_{\substack{b \in \Bs \\ x_k^h \in {\rm int}(b)}} \| v'
   \|_{L^2(b)}^2 \Big)^{1/2},
 \end{align*}
 and we are only left to estimate the sums involving the test
 function.

 To that end, we simply note that, due to (T4), for any fixed point $x
 \in (-1/2, 1/2]$, the maximal number of bonds appearing in the sum on the
 left-hand side below and crossing $x$ is three; hence,
 \begin{displaymath}
   \sum_{k \in \Ks_\c}  
   \sum_{\substack{b \in \Bs \\ x_k^h \in {\rm int}(b)}} \| v'
   \|_{L^2(b)}^2 \leq 3 \| v' \|_{L^2}^2.
 \end{displaymath}
 This concludes the proof.
\end{proof}

\subsection{Estimate of the external residual}
\label{SectionResidualExternalForce} 
We now turn to the estimate of the residual of the external energy,
which was defined as
\begin{equation}
  R_{\rm ext}[v] = \<f, v\>_\eps - \<f,I_h v\>_h = \int_\Omega \big[I_\eps(f
  v) - I_h (f I_h v) \big] \dx.
\label{Eq:ResGraExEnOrig}
\end{equation}
We outline the key points of the argument for estimating $R_{\rm
  ext}$, before stating the result.

We define a slightly extended continuum region,
\begin{displaymath}
  \tilde\Omega_\c := \bigcup \big\{ T_\ell^\eps : |T_\ell^\eps \cap \Omega_\c| > 0 \big\};
\end{displaymath}
then $I_\eps(f v) = I_h(f I_h v)$ in $\Omega_\a \setminus
\tilde{\Omega}_\c$, and therefore
\begin{align} 
  \notag
  R_{\rm ext}[v] =\,& \int_{\tilde\Omega_\c} \big[ I_\eps(f v) - I_h
  (f I_hv) \big] \dx \\
  \notag
  =\,& \int_{\tilde\Omega_\c} \big[ I_\eps(f v) - f v \big] \dx +
  \int_{\tilde\Omega_\c} \big[ f v - f I_h v \big] \dx +
  \int_{\tilde\Omega_\c} \big[ f I_h v - I_h (f I_h v) \big] \dx \\
  \label{eq:Rext:decomp}
  =:\,& R_1[v] + R_2[v] + R_3[v].
\end{align}
The three terms can be estimated using standard interpolation error
results, hence we only give a brief outline of the proof of the
resulting bound.

\begin{proposition}
  \label{Theo:RisidualExternalForce}
  Let $f \in C^2(\tilde{\Omega}_c)$, then 
  \begin{equation}
    \label{ResidualExternalForce}
    \|R_{\rm ext}\|_{\Us^{-1,2}} \le \eta^f + \eta^q,
  \end{equation}
  where the error due to external forces $\eta^f$ and the ``quadrature
  error'' $\eta^q$ are, respectively, defined as follows: for $* \in
  \{f, q\}$, 
  \begin{align*}
    (\eta^*)^2 :=\,& \sum_{\substack{k \in \{1,\dots, K\} \\ T_k^h
        \subset \tilde\Omega_\c}} (\eta_k^*)^2, \\
    \text{where} \qquad (\eta_k^f)^2 :=\,& \smfrac{h_k^2}{\pi^2} \| f \|_{L^2(T_k^h)}^2,
    \\
    \text{and}  \qquad (\eta_k^q)^2 :=\,& (\eps^4 + h_k^4)
    \| f' \|_{L^2(T_k^h)}^2 + {\smfrac{(\eps^4 + h_k^4)}{4 \pi^2}} \| f''\|_{L^2(T_k^h)}^2.
  \end{align*}
  %
% and $\Ks_\c$ defined in
%   \S\ref{SectionResidualStoredEnergy},
%   \begin{align*}
%     (\eta^*)^2 := \sum_{k \in \Ks_\c} (\eta^*_k)^2, \quad \text{where}
%     \quad 
%     \big(\eta_k^*\big)^2 := \cases{ 
%      \big(\bar{\eta}_k^*\big)^2 + \big(\bar{\eta}_{k-1}^*\big)^2,
%      & \text{ if } x_{k-1}^h = R_\a, \\
%       % 
%       \big(\bar{\eta}_k^*\big)^2 + \big(\bar{\eta}_{k+1}^*\big)^2, & \text{
%         if } x_{k}^h = L_\a, \\      
%       % 
%      \big(\bar{\eta}_k^*\big)^2 , & \text{otherwise,}
%     }
%   \end{align*}
%   and where
%   \begin{displaymath}
%     \big(\bar{\eta}_k^f\big)^2 := \smfrac{h_k^2}{\pi^2} \| f_c \|_{L^2(T_k^h)}^2 \quad \text{and} \quad
%     \big(\bar{\eta}_k^q\big)^2 := \smfrac{\eps^4 + h_k^4}{2}
%     \| f' \|_{L^2(T_k^h)}^2 + {\smfrac{(1+2\max_jh_j)^2(\eps^4 + h_k^4)}{32 \pi^2}} \| f''\|_{L^2(T_k^h)}^2.
%   \end{displaymath}
\end{proposition}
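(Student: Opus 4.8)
The plan is to estimate the three pieces $R_1,R_2,R_3$ of the decomposition \eqref{eq:Rext:decomp} separately, each by a standard interpolation or quadrature argument, and then to combine them with a Cauchy--Schwarz inequality over the finite elements. Throughout I would rely on two elementary facts. The first is the sharp Poincar\'e inequality $\|w\|_{L^2(T)} \le \tfrac{|T|}{\pi}\|w'\|_{L^2(T)}$, valid for any $w$ that vanishes at both endpoints of an interval $T$; this is the source of the factors $\tfrac1\pi$ and $\tfrac1{4\pi^2}$ in the estimator. The second is the trapezoidal-remainder identity $\int_T (I_T g - g)\dx = \int_T (x - m_T)\,g'(x)\dx$, valid for $g\in C^1(T)$, where $I_T g$ is the affine interpolant of $g$ on $T$ and $m_T$ is the midpoint of $T$.

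For $R_2[v] = \int_{\tilde\Omega_\c} f\,(v - I_h v)\dx$ the estimate is immediate: on each element $T_k^h \subset \tilde\Omega_\c$ the function $v - I_h v$ vanishes at the nodes $x_{k-1}^h, x_k^h$, and $(I_h v)'$ is the elementwise mean of $v'$ (so that $\|(v-I_hv)'\|_{L^2(T_k^h)} \le \|v'\|_{L^2(T_k^h)}$); hence Poincar\'e and Cauchy--Schwarz give $\big|\int_{T_k^h} f(v - I_h v)\dx\big| \le \tfrac{h_k}{\pi}\|f\|_{L^2(T_k^h)}\|v'\|_{L^2(T_k^h)}$, and summing over $k$ with one more Cauchy--Schwarz produces exactly $\eta^f\,\|v\|_{\Us^{1,2}}$.

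The quadrature terms $R_1$ and $R_3$ are the more involved ones and yield $\eta^q$. For $R_3$ I would write it as a sum over continuum elements of $\int_{T_k^h}(I_h(fw) - fw)\dx$ with $w := I_h v$, apply the trapezoidal identity with $g = fw$, and use $(fw)' = f'w + f w'$ together with the fact that $w'$ is constant on $T_k^h$ to split the elementwise contribution into $\int_{T_k^h}(x - m_k)f'w\dx + w'\int_{T_k^h}(x - m_k)f\dx$. Each piece is then bounded by inserting elementwise means (of $f$, $f'$, and $f'w$) so that a further Poincar\'e inequality applies, by decomposing the affine function $w$ on $T_k^h$ into its midpoint value plus its linear part, and by estimating $|w(m_k)| \le \|v\|_{L^\infty}\le \|v'\|_{L^2}$ (using $v(0)=0$) for the term in which $w$ is undifferentiated. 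This gives $|R_3[v]| \lesssim \big(\sum_k h_k^4(\|f'\|_{L^2(T_k^h)}^2 + \tfrac{1}{4\pi^2}\|f''\|_{L^2(T_k^h)}^2)\big)^{1/2}\|v'\|_{L^2}$. The term $R_1 = \int_{\tilde\Omega_\c}(I_\eps(fv) - fv)\dx$ is treated identically, but now on the micro-elements $T_\ell^\eps$ (on which $v$ is affine), producing the same structure with $\eps$ in place of $h_k$; the micro-element contributions are finally regrouped into the finite elements $T_k^h$ containing them, which turns the $\eps$-weighted micro-local sums into the element-local norms $\|f'\|_{L^2(T_k^h)},\|f''\|_{L^2(T_k^h)}$ that appear in $(\eta_k^q)^2$. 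Adding the $R_1$, $R_2$, $R_3$ bounds and taking suprema over $\|v\|_{\Us^{1,2}}=1$ gives \eqref{ResidualExternalForce}.

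I expect the only real obstacle to be bookkeeping rather than mathematical depth: one must (i) handle the terms in which $v$ (or $w$) enters undifferentiated, which need the global bound $\|v\|_{L^\infty}\le\|v'\|_{L^2}$ and then a redistribution — via Cauchy--Schwarz and $\sum_k h_k = |\tilde\Omega_\c| \le 1$ — back into the element-local form of $(\eta_k^q)^2$; and (ii) keep careful track of which norm ($\|f'\|$ or $\|f''\|$) and which power of $h_k$ or $\eps$ each term contributes, since some terms must be pushed all the way to a second derivative of $f$ to produce enough powers of the mesh size to match the stated estimator. The constants $1$ and $\tfrac{1}{4\pi^2}$ in $(\eta_k^q)^2$ are not sharp and leave ample room to absorb the various $\tfrac{1}{\sqrt{12}\,\pi}$-type factors generated along the way.
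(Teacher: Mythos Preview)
Your proposal is correct and uses the same decomposition \eqref{eq:Rext:decomp} as the paper, with the $R_2$ estimate handled identically. The treatment of the quadrature terms $R_1, R_3$, however, follows a genuinely different route. The paper proceeds more directly: it applies the second--order interpolation estimate $\big|\int_T(I_Tg - g)\,dx\big| \lesssim |T|^2 \|g''\|_{L^1(T)}$ with $g = f w$ (where $w = v$ for $R_1$ and $w = I_h v$ for $R_3$), expands $(fw)'' = f'' w + 2 f' w'$ using that $w$ is elementwise affine, sums over elements with a global Cauchy--Schwarz, and then eliminates the undifferentiated factor via the \emph{global} Poincar\'e inequality $\|w\|_{L^2(\Omega)} \le \tfrac{1}{\pi}\|w'\|_{L^2(\Omega)}$ together with the stability $\|I_h v'\|_{L^2}\le\|v'\|_{L^2}$. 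Your route --- trapezoidal identity, insertion of elementwise means, then the pointwise bound $|w(m_k)|\le\|v\|_{L^\infty}\le\|v'\|_{L^2}$ followed by a Cauchy--Schwarz redistribution using $\sum_k h_k\le 1$ --- also reaches the stated estimator, but with more bookkeeping. The paper's use of the global $L^2$ Poincar\'e inequality is what buys the simplification: after that step the bound already has the form $(\text{global weighted norm of }f',f'')\times\|v'\|_{L^2}$, which localises into the element contributions $(\eta_k^q)^2$ without any further redistribution, and it makes the provenance of the factor $\tfrac{1}{4\pi^2}$ transparent.
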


\begin{remark}
  1. Note that there is an error contribution from the atomistic
  region, due to the fact that in the elements touching the a/c
  interface, the ``quadrature'' approximation of the external forces
  is not exact. For the purpose of mesh refinement, we count this
  error towards the neighbouring elements in the continuum region.

  2. An alternative residual estimate that does not use $f \in
  C^2(\tilde\Omega_\c)$, but only the discrete setting, is presented
  in \cite{Wang:arXiv:1112.5480}. This requires a much more involved
  argument.
\end{remark}

\begin{proof}
  From \eqref{eq:Rext:decomp} we obtain
  \begin{displaymath}
    R_{\rm ext}[v] = R_1[v] + R_2[v] + R_3[v].
  \end{displaymath}
  Applying standard interpolation error estimates (see, e.g.,
  \cite{BrennerScott, BraessFEM}) on elements $T_\ell^\eps, T_k^h$, we
  obtain
  \begin{align*}
    \int_{T^\eps_\ell} \big[ I_\eps(f v) - f v \big] \dx \leq~& \smfrac{1}{4} \| \eps^2f''\|_{L^2(T^\eps_\ell)}\|v\|_{L^2(T^\eps_\ell)} + 
    \smfrac{1}{2} \| \eps^2 f' \|_{L^2(T^\eps_\ell)} \|v'\|_{L^2(T^\eps_\ell)},  \\
    \int_{T^h_k} \big[ f v - f I_h v \big] \dx \leq~& \smfrac{1}{\pi} \| h_k f
    \|_{L^2(T^h_k)} \| v' \|_{L^2(T^h_k)}, \quad \text{and} \\
    \int_{T^h_k} \big[ I_h(f v) - f I_hv \big] \dx \leq~& \smfrac{1}{4} \| h_k^2f''\|_{L^2(T^h_k)} \|I_h v\|_{L^2(T^h_k)} + 
    \smfrac{1}{2} \| h_k^2 f' \|_{L^2(T^h_k)} \|I_h v'\|_{L^2(T^h_k)}.
  \end{align*}
  Summing over all elements, applying the Cauchy--Schwarz inequality,
  and defining $h(x) := h_k$ for $x \in T_k^h$,
  \begin{align*}
    R_1[v] 
    \leq\,& 
    \smfrac{\eps^2}{4} \| f'' \|_{L^2(\tilde\Omega_\c)} \, \| v \|_{L^2} +
    \smfrac{\eps^2}{2} \| f' \|_{L^2(\tilde\Omega_\c)}  \, \| v' \|_{L^2},  \\
    R_2[v] \leq\,&
    \smfrac{1}{\pi} \| h f
    \|_{L^2(\tilde\Omega_\c)} \, \| v' \|_{L^2}, \qquad \text{and}   \\
    R_3[v] \leq\,& 
    \smfrac{1}{4} \| h^2 f'' \|_{L^2(\tilde\Omega_\c)} \, \| I_h v
    \|_{L^2} + \smfrac{1}{2} \| h^2 f' \|_{L^2(\tilde\Omega_\c)} \, \|
    I_h v' \|_{L^2}.
  \end{align*}
  We now use the estimates (which exploit the fact that $I_h v'$ is
  the $L^2$-orthogonal projection of $v'$ onto piecewise constants)
  \begin{align*}
    & \| v \|_{L^2} \leq {\smfrac{1}{\pi}} \| v' \|_{L^2}, 
    \quad
    \| I_h v' \|_{L^2} \leq \| v' \|_{L^2}, \\
    \text{and} \quad& 
    \| I_h v \|_{L^2} \leq \smfrac{1}{\pi} \| I_h v' \|_{L^2} \leq
    \smfrac{1}{\pi} \| v' \|_{L^2},
  \end{align*}
  to deduce that
  \begin{align*}
    R_{\rm ext}[v] =\,& R_1[v] + R_2[v] + R_3[v]
    \\
    \leq\,&\smfrac{1}{\pi} \| h f \|_{L^2(\tilde\Omega_\c)}  \| v' \|_{L^2} + \Big( \smfrac{1}{4 \pi^2} \| (\eps^2+h^2) f'' \|_{L^2(\tilde\Omega_\c)}^2 +
    \| (\eps^2+h^2) f' \|_{L^2(\tilde\Omega_\c)}^2
    \Big)^{1/2} \| v' \|_{L^2}.
    % \Big( 
    % {\color{blue}\smfrac{\eps^4}{64 \pi^2}} \| f'' \|_{L^2(\tilde\Omega_\c)}^2
    % + \smfrac{\eps^4}{4} \| f' \|_{L^2(\tilde\Omega_\c)}^2
    % + \smfrac{1}{\pi^2} \| h f_c \|_{L^2(\tilde\Omega_\c)} \\
    % & \quad 
    % + {\color{blue}\smfrac{9}{64 \pi^2}} \| h^2 f'' \|_{L^2(\tilde\Omega_\c)}^2
    % + \smfrac{1}{4} \| h^2 f' \|_{L^2(\tilde\Omega_\c)}^2 \Big)^{1/2}\,
    % \| v' \|_{L^2}.
  \end{align*}
  The result follows by splitting the norms inside the brackets over
  elements.
\end{proof}

\subsection{External residual estimate for singular forces}
\label{sec:frc_est_sing}
In our numerical experiments in Section \ref{Numerics} we shall employ
an external force that behaves like $|f(x)| \sim |x|^{-1}$ near $x =
0$ (we use the ``singularity'' in the force to mimic a defect). Let us
suppose that we also have $|f'(x)| \sim |x|^{-2}$ and $|f''(x)| \sim
|x|^{-3}$ near the origin. We now give a formal motivation why the
quadrature estimates employed in Proposition
\ref{Theo:RisidualExternalForce} are inadequate in this situation.

Applying the quadrature estimates to such a force field,  we obtain
\begin{align*}
  \eta_k^q \approx h_k^2 \| f' \|_{L^2(T_k^h)} + h_k^2 \| f''
  \|_{L^2(T_k^h)} 
  \approx h_k^{5/2} |x_k^h|^{-3/2} + h_k^{5/2} |x_k^h|^{-5/2}.
\end{align*}
We notice that, for $T_k^h$ near the origin, $\| f'' \|_{L^2(T_k^h)}
\approx |x_k^h|^{-1} \| f' \|_{L^2(T_k^h)}$. Moreover, the quadrature
estimate is $O(1)$ and cannot be controlled.  By contrast,
\begin{displaymath}
  \eta_k^f \approx h_k^{3/2} |x_k^h|^{-1}, 
\end{displaymath}
from which we conclude that $h_k^2 \| f' \|_{L^2(T_k^h)}$ is dominated
by $\eta_k^f$, but that $\eta_k^f$ is itself dominated by $h_k^2 \|
f'' \|_{L^2(T_k^h)}$.

The origin of this undesirable effect is the (ab-)use of the
Poincar\'e inequality in the proof of Proposition
\ref{Theo:RisidualExternalForce}. In the remainder of this section, we
shall remedy the situation by replacing the standard Poincar\'e
inequality with a weighted variant. This approach is inspired by
\cite{OL_Acta}.

\begin{lemma}
  \label{th:weighted_poincare}
  Let $\tilde\Omega_\c$ be defined as in Section
  \ref{SectionResidualExternalForce}, and let $w(x) := x \log^2(x)$,
  then
  \begin{displaymath}
    \| w^{-1} v \|_{L^2(\tilde\Omega_\c)} \leq \smfrac{1}{\log 2} \| v'
    \|_{L^2} \qquad \forall v \in H^1(\Omega), v(0) = 0,
  \end{displaymath}
\end{lemma}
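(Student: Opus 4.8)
The plan is to reduce this to a one-dimensional weighted Hardy-type inequality on an interval near the origin, and then handle the part of $\tilde\Omega_\c$ away from the origin by a trivial bound. Recall that by property (T4) and the definition of $\tilde\Omega_\c$, the set $\tilde\Omega_\c$ is contained in $\Omega = [-1/2,1/2]$ and is bounded away from $0$ unless $\Omega_\a$ is trivial; but to get a bound uniform in the mesh and the choice of $\Omega_\a$, one should just prove the inequality with $\tilde\Omega_\c$ replaced by all of $(-1/2,1/2)$ (or a subinterval symmetric about $0$), since the weight $w(x)^{-1} = (x\log^2 x)^{-1}$ only makes the left-hand side larger. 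Note that on $(0,1/2]$ we have $\log x \le \log(1/2) = -\log 2 < 0$, so $\log^2 x \ge \log^2 2$ and $w(x) = x\log^2 x$ is positive there; one must be slightly careful about the sign/branch of $\log$ on the negative axis, and the cleanest fix is to interpret $w(x) := |x|\log^2|x|$, which is what is evidently intended.

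First I would establish the key antiderivative identity: for $x \in (0,1/2]$,
\[
  \frac{d}{dx}\Big( -\frac{1}{\log x} \Big) = \frac{1}{x \log^2 x} = \frac{1}{w(x)}.
\]
Hence $\int_0^t w(x)^{-1}\dx = -1/\log t$ (the lower limit contributes $0$ since $1/\log x \to 0$ as $x\to 0^+$), which is finite precisely because the weight is integrable near the origin — this is the whole point of using $x\log^2 x$ rather than a pure power. Then, for $v \in H^1$ with $v(0)=0$, write $v(x) = \int_0^x v'(s)\ds$ and estimate, via Cauchy--Schwarz in the form $|v(x)|^2 \le \big(\int_0^x w(s)^{-1}\ds\big)\big(\int_0^x w(s)|v'(s)|^2 \ds\big)$. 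Actually the slicker route is the standard Hardy-inequality computation: compute $\int_0^{1/2} w(x)^{-2} v(x)^2 \dx$ by writing $w^{-2} = \frac{d}{dx}\big(\text{something}\big)$ and integrating by parts to move the derivative onto $v^2$, producing a $v v'$ cross term which is then absorbed by Cauchy--Schwarz and Young's inequality. I expect the boundary term at $x=1/2$ to have the right sign (it comes with a minus and can be dropped), and the constant that falls out of the Young step should be exactly $1/\log^2 2$, matching the claimed $1/\log 2$ after taking square roots.

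The part of the argument away from the origin — say $|x| \in [1/2\cdot 2^{-1}, 1/2]$, or more simply all of $\tilde\Omega_\c$ — is routine: there $w(x)^{-1} \le (1/2)^{-1}\log^{-2}2 \cdot$(bounded), so $\|w^{-1}v\|_{L^2}$ on that piece is controlled by $C\|v\|_{L^2}$ and then by the ordinary Poincaré inequality $\|v\|_{L^2}\le \frac1\pi\|v'\|_{L^2}$ (using $v(0)=0$, or periodicity). One then combines the two contributions. The main obstacle, and the only place requiring genuine care, is getting the sharp constant $1/\log 2$: this forces one to do the Hardy estimate on the exact interval $(0,1/2)$ with the exact weight and to track the boundary term and the Young's-inequality constant precisely, rather than using a lossy textbook Hardy inequality. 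If a sharp constant is not essential one could instead use the cruder pointwise bound $|v(x)|^2 \le (-1/\log|x|)\int w|v'|^2$ together with Fubini, at the cost of a possibly worse constant; but I would aim to reproduce $1/\log 2$ by the integration-by-parts route. The symmetry of the problem under $x\mapsto -x$ lets one do everything on the positive half-line and double, so no new ideas are needed for the negative part.
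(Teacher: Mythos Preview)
Your approach is sound, but the paper takes a much simpler route. Rather than a Hardy-type integration-by-parts argument or the weighted Cauchy--Schwarz you sketch, the paper applies the \emph{unweighted} pointwise bound
\[
  |v(x)| \;=\; \Big|\int_0^x v'\Big| \;\le\; |x|^{1/2}\,\|v'\|_{L^2},
\]
squares, divides by $w(x)^2$, and integrates over the right half of $\tilde\Omega_\c$, reducing the whole question to a single explicit one-variable integral $\int_{\tilde R_\a}^{1/2} |x|\,w(x)^{-2}\,dx$. That integral is evaluated in closed form using exactly the antiderivative $(-1/\log x)' = (x\log^2 x)^{-1}$ you identified; the boundary contribution at the lower limit $\tilde R_\a$ has the favourable sign and is discarded, and the left half of the domain is handled by symmetry. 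There is no integration by parts, no Young-inequality absorption, no boundary term at the origin to track, and no separate ``near vs.\ far from the origin'' decomposition. Your route is the classical technique for sharp Hardy inequalities and would yield a genuine estimate on all of $(0,\tfrac12)$ with a potentially better constant, but here the crude bound already makes the weight integral converge, which is all that is required---in effect the paper's argument is even more elementary than the ``cruder pointwise bound plus Fubini'' fallback you mention at the end.
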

\begin{proof}
  We begin by noting that
  \begin{displaymath}
    |v(x)| \leq |x|^{1/2} \| v' \|_{L^2} \quad \text{for all } x \in \Omega.
  \end{displaymath}
  Hence, we can estimate
  \begin{displaymath}
    \int_{\tilde{R}_\a}^{1/2} | w^{-1} v(x)|^2 \dx \leq  \| v'
    \|_{L^2(0, 1/2)}^2 \int_{\tilde{R}_\a}^{1/2} \frac{1}{x
      \log^2(2x)} \dx.
  \end{displaymath}
  Since $(\log^{-1}(x))' = - (x \log^2(x))^{-1}$ we obtain
  \begin{displaymath}
    \int_{\tilde{R}_\a}^{1/2} | w^{-1} v(x)|^2 \dx \leq
    \smfrac{1}{\log(2)} \|v' \|_{L^2(0, 1/2)}^2.
  \end{displaymath}
  Applying an analogous argument in the left half of the domain, we
  obtain the stated estimate.
\end{proof}

We now apply this estimate to obtain an alternative external residual
estimate.

\begin{proposition}
  \label{Theo:RisidualExternalForce_Sing}
  Let $f \in C^2(\tilde{\Omega}_c)$, then 
  \begin{equation}
    \label{ResidualExternalForce_sing}
    \|R_{\rm ext}\|_{\Us^{-1,2}} \le \eta^f + \hat{\eta}^q,
  \end{equation}
  where $\eta^f$ is defined in \eqref{ResidualExternalForce} and
  $\hat{\eta}^q$ is defined as follows:
  \begin{align*}
    (\hat{\eta}^q)^2 :=\,& \sum_{\substack{k \in \{1,\dots, K\} \\ T_k^h
        \subset \tilde\Omega_\c}} (\hat{\eta}^q_k)^2, \\
    \text{where}  \qquad (\hat\eta_k^q)^2 :=\,& (\eps^4 + h_k^4)
    \| f' \|_{L^2(T_k^h)}^2 + \smfrac{\eps^4 + h_k^4}{\log^2 2} \| w f''\|_{L^2(T_k^h)}^2,
  \end{align*}
  and where $w$ is defined in Lemma \ref{th:weighted_poincare}.
\end{proposition}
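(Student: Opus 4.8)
The plan is to revisit the proof of Proposition~\ref{Theo:RisidualExternalForce} and replace only the step that produces the problematic $\|f''\|$ term. Recall that there $R_{\rm ext}[v] = R_1[v] + R_2[v] + R_3[v]$ with the decomposition~\eqref{eq:Rext:decomp}, and that the contributions involving $f''$ arise from $R_1$ and $R_3$, where the interpolation error estimates produce terms of the form $\smfrac14\|\eps^2 f''\|_{L^2}\|v\|_{L^2}$ and $\smfrac14\|h^2 f''\|_{L^2}\|I_h v\|_{L^2}$. In the original proof these were bounded by invoking the plain Poincar\'e inequality $\|v\|_{L^2}\le \smfrac1\pi\|v'\|_{L^2}$ (and the analogous bound for $I_h v$), which is precisely the step that wastes a factor $|x_k^h|^{-1}$ near the singularity. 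Instead, I would write $\|w^{-1} v\|_{L^2}\cdot\|w f''\|_{L^2(T_k^h)} \ge \int_{T_k^h} |f'' v|$ on each continuum element — i.e.\ insert the weight $w(x) = x\log^2 x$ — and then apply Lemma~\ref{th:weighted_poincare} to control $\|w^{-1} v\|_{L^2(\tilde\Omega_\c)}$ by $\smfrac1{\log 2}\|v'\|_{L^2}$.

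Concretely, the steps are: (1) keep the element-wise interpolation estimates for $R_1$, $R_2$, $R_3$ exactly as in the proof of Proposition~\ref{Theo:RisidualExternalForce}, but for the two terms containing $f''$ factor out the weight, writing $\int_{T_k^h}|\eps^2 f'' v|\le \|\eps^2 w f''\|_{L^2(T_k^h)}\|w^{-1}v\|_{L^2(T_k^h)}$ and similarly $\int_{T_k^h}|h_k^2 f'' I_h v|\le \|h_k^2 w f''\|_{L^2(T_k^h)}\|w^{-1} I_h v\|_{L^2(T_k^h)}$; (2) sum over the continuum elements $T_k^h\subset\tilde\Omega_\c$ and apply Cauchy--Schwarz in $k$, collecting $(\eps^4+h_k^4)\|wf''\|_{L^2(T_k^h)}^2$ against $\|w^{-1}v\|_{L^2(\tilde\Omega_\c)}^2$; (3) apply Lemma~\ref{th:weighted_poincare} to both $v$ and $I_h v$ — here one needs $(I_h v)(0)=0$, which holds because $0$ is an atomistic node (T2), and one needs $\|w^{-1} I_h v\|_{L^2(\tilde\Omega_\c)} \le \smfrac1{\log 2}\|(I_h v)'\|_{L^2}\le \smfrac1{\log 2}\|v'\|_{L^2}$, using that $I_h v'$ is the $L^2$-projection of $v'$ onto piecewise constants as in the original proof; (4) the $f'$ terms and the $hf$ term (which yield $\eta^f$ and the $\|f'\|$ part of $\hat\eta^q$) are handled verbatim as before, since those estimates are already sharp; (5) assemble the pieces into $\|R_{\rm ext}\|_{\Us^{-1,2}}\le \eta^f + \hat\eta^q$ and finish by splitting the resulting norms over elements.

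The main obstacle is step~(3): applying the weighted Poincar\'e inequality to $I_h v$ rather than $v$. Lemma~\ref{th:weighted_poincare} is stated for $v\in H^1(\Omega)$ with $v(0)=0$, and $I_h v$ qualifies only because $0$ is a mesh node and $I_h v\in\Ps_1(\Ts^h)\subset H^1$; one must also verify that the integrals are finite, i.e.\ that no continuum element abuts $x=0$ — but this is guaranteed since $0\in\Omega_\a$ and $\tilde\Omega_\c$ stays away from the interior of $\Omega_\a$, so $w^{-1}$ is bounded on $\tilde\Omega_\c$ and the weighted norms are well-defined. A secondary point is bookkeeping: the weight $w$ must be chosen with the same $\log 2$ normalization on both halves of the domain so that the constant $\smfrac1{\log 2}$ is uniform, exactly as set up in the lemma. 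Once these are in place the rest is a routine repetition of the earlier argument with $w$-weighted Cauchy--Schwarz in place of the plain one.
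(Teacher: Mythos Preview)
Your proposal is correct and follows essentially the same route as the paper: decompose $R_{\rm ext}=R_1+R_2+R_3$ as in \eqref{eq:Rext:decomp}, treat $R_2$ and the $f'$ contributions exactly as in Proposition~\ref{Theo:RisidualExternalForce}, and for the $f''$ contributions in $R_1$ and $R_3$ insert the weight $w$ via Cauchy--Schwarz and apply Lemma~\ref{th:weighted_poincare} to $v$ and $I_h v$ respectively, finishing with $\|I_h v'\|_{L^2}\le\|v'\|_{L^2}$. Your attention to the side conditions (that $0$ is an atomistic node so $(I_h v)(0)=0$, and that $\tilde\Omega_\c$ avoids a neighbourhood of $0$ so $w^{-1}$ is bounded there) is a nice addition that the paper leaves implicit.
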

\begin{proof}
  We again use the splitting \eqref{eq:Rext:decomp} to obtain
  \begin{displaymath}
    R_{\rm ext}[v] = R_1[v] + R_2[v] + R_3[v].
  \end{displaymath}
  The residual term $R_2[v]$ is estimated in the same way as in the
  proof of Proposition \ref{Theo:RisidualExternalForce}, and gives
  rise to the term $\eta^f$ in the estimate.

  We show only the modified estimate for $R_3[v]$, since the estimate
  for $R_1[v]$ is analogous. Applying again a standard interpolation
  error estimate, we obtain
  \begin{align}
    \big|R_3[v]\big| \leq \smfrac14 \| h^2 (f I_h v)''
    \|_{L^1(\tilde\Omega_\c)} 
    \label{eq:singfrc_10}
    \leq \smfrac12 \| h^2 f' I_h v' \|_{L^1(\tilde\Omega_\c)} +
    \smfrac14 \| h^2 f'' I_h v \|_{L^1(\tilde\Omega_\c)}.
  \end{align}
  The term $\smfrac12 \| h^2 f' I_h v' \|_{L^1(\tilde\Omega_\c)}$ can
  be treated in the same way as in the proof of Proposition
  \ref{Theo:RisidualExternalForce}.

  To estimate the second term on the right-hand side of
  \eqref{eq:singfrc_10} we insert the weighting function $w$ defined
  in Lemma \ref{th:weighted_poincare} and then apply the weighted
  Poincar\'e inequality:
  \begin{align*}
    \smfrac14 \| h^2 f'' I_h v \|_{L^1(\tilde\Omega_\c)} =\,& 
     \smfrac14 \big\| (h^2 w f'') ( w^{-1} I_h v)
     \big\|_{L^1(\tilde\Omega_\c)}  \\
    \leq\,& \smfrac14 \| h^2 w f'' \|_{L^2(\tilde\Omega_\c)}\, \|
    w^{-1} I_h v \|_{L^2(\tilde\Omega_\c)} \\
    \leq\,& \smfrac{1}{4 \log 2} \| h^2 w f'' \|_{L^2(\tilde\Omega_\c)}\, \|
    I_h v' \|_{L^2}.
  \end{align*}

  By continuing to argue as in the proof of Proposition
  \ref{Theo:RisidualExternalForce} we obtain the stated estimate.
\end{proof}

We can now revisit the issue of relative magnitude of the various
contributions to the residual estimate for the case where $|f(x)| \sim
|x|^{-1}$, $|f'(x)| \sim |x|^{-2}$ and $|f''(x)| \sim |x|^{-3}$. Note
that the effect of the weighting function is that $|w(x) f''(x)| \sim
|\log^2(x)| |x|^{-2}$ which is now comparable to $|f'(x)|$ up to a log
factor.

More precisely, suppose that $T_k^h$ is near the origin, then we now
obtain
\begin{displaymath}
  \hat{\eta}_k^q \approx h_k^{5/2} |x_k^h|^{-3/2} + h_k^{5/2}
  |x_k^h|^{-3/2} \log^2 (x_k^h) .
\end{displaymath}
In particular, we observe that in the new external residual estimate,
the quadrature error is dominated by the main error term $\eta^f$,
which is the same as in the standard estimate given in Proposition
\ref{SectionResidualExternalForce}. Thus, in our numerical algorithms
presented in Section \ref{Numerics} we will be justified in neglecting
the effect of the quadrature errors.

\section{Stability}
\label{Sec:Stability}
Stability of the exact (i.e., the atomistic) model is the second key
ingredient for deriving an a posteriori error bound. Our aim is to
prove coercivity (or, positivity) of the atomistic Hessian at the QC
solution $y_{\qc}$:
\begin{equation}
  E''_{\a}(y_{\qc}) [v,v] \ge c_\a(y_{\qc}) \| v' \|_{L^2}^2 \qquad
  \forall v \in \Us,
\label{Stability-1}
\end{equation}
for some constant $c_\a(y_{\qc}) >0$, where the Hessian operator of the
atomistic model is given by
\begin{equation*}
E''_{\a}(y)[v,v] = \eps \sum_{\ell=-N+1}^N \phi''(y_\ell')|v_\ell'|^2 + \eps
\sum_{\ell=-N+1}^N \phi''(y_\ell' + y_{\ell+1}') |v_\ell'
+ v'_{\ell+1}|^2.
\end{equation*} 
Following \cite{Dobson:arXiv:0905.2914, Ortner:arXiv:0911.0671} we
note that the 'non-local' Hessian terms $|v'_\ell + v'_{\ell+1}|^2$
can be rewritten in terms of the 'local' terms $|v'_\ell|^2$ and
$|v'_{\ell+1}|^2$ and a strain-gradient correction,
\begin{equation*}
|v'_\ell + v'_{\ell+1}|^2 = 2|v'_\ell|^2+2
|v'_{\ell+1}|^2 - \eps^2 |v''_\ell|^2. 
\end{equation*}
Using this formula, we can rewrite the Hessian in the form
\begin{equation*}
E_{\a}''(y)[v,v] = \eps \sum_{\ell=-N+1}^N A_\ell |v_\ell'|^2 + \eps
\sum_{\ell = -N+1}^{N} B_\ell |v_\ell''|^2,
\end{equation*}
where
\begin{align}
&A_{\ell}(y) := \phi''(y_\ell')+2\phi''(y_{\ell-1}' + y_\ell') +
2\phi''(y_\ell'+y_{\ell+1}')
\label{StabilityConstant-1}
\\
& B_{\ell}(y) := -\phi''(y'_{\ell}+y'_{\ell+1}) \nonumber.
\end{align}

Recall our assumption in \S\ref{subsec:AModel} that $\phi$ is convex
in $(0, {r_\ast})$ and concave in $({r_\ast}, +\infty)$. For typical
interactions such as Lennard-Jones or Morse potentials one generally
observes that $y_\ell' \geq r_\ast / 2$, hence we shall assume this
throughout.  As a result of this assumption, and the properties of
$\phi$, we have $B_\ell \ge 0 \ \forall \ell \in \Z$.

As an immediate consequence we obtain the following lemma, which gives
sufficient conditions for the stability of the atomistic hessian
evaluated at the QC solution.

\begin{proposition}
  \label{th:stab_lemma}
  Let $y_{\qc} \in \Ys^\eps$ satisfy $\min_{\ell}(y_\qc)'_{\ell} \ge {r_\ast}/2$, then
  \begin{displaymath}
    E''_{\a}(y_{\qc})[v,v] \ge A_{\ast}(y_{\qc}) \| v'
    \|_{\ell_{\eps}^2}^2 \qquad \forall v \in \Us, \quad
    \text{where} \quad
    A_\ast(y_{\qc}) := \min_{\ell = 1, \dots, N} A_\ell(y_{\qc}),
  \end{displaymath}
  and the coefficients $A_\ell(y_{\qc})$ are defined in
  \eqref{StabilityConstant-1}.
\end{proposition}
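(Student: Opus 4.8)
The plan is to exploit the reformulation of the atomistic Hessian that was just derived, namely
\begin{displaymath}
  E_{\a}''(y_{\qc})[v,v] = \eps \sum_{\ell=-N+1}^N A_\ell(y_{\qc}) |v_\ell'|^2 + \eps
  \sum_{\ell=-N+1}^N B_\ell(y_{\qc}) |v_\ell''|^2,
\end{displaymath}
and to observe that the hypothesis $\min_\ell (y_{\qc})_\ell' \ge r_\ast/2$ is precisely what guarantees $B_\ell(y_{\qc}) \ge 0$ for all $\ell$. Indeed, since $(y_{\qc})_\ell' + (y_{\qc})_{\ell+1}' \ge r_\ast$ lies in the concave regime of $\phi$, we have $\phi''\big((y_{\qc})_\ell' + (y_{\qc})_{\ell+1}'\big) \le 0$, hence $B_\ell(y_{\qc}) = -\phi''\big((y_{\qc})_\ell' + (y_{\qc})_{\ell+1}'\big) \ge 0$.

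Given this, the argument is essentially immediate: I would simply discard the nonnegative strain-gradient sum,
\begin{displaymath}
  E_{\a}''(y_{\qc})[v,v] \ge \eps \sum_{\ell=-N+1}^N A_\ell(y_{\qc}) |v_\ell'|^2
  \ge \Big(\min_{\ell} A_\ell(y_{\qc})\Big) \, \eps \sum_{\ell=-N+1}^N |v_\ell'|^2
  = A_\ast(y_{\qc}) \, \| v' \|_{\ell_\eps^2}^2 .
\end{displaymath}
The only point requiring a word of care is the index range in the definition of $A_\ast$: the minimum is taken over $\ell = 1, \dots, N$, whereas the sum runs over $\ell = -N+1, \dots, N$; periodicity of $y_{\qc}$ (and hence of the coefficients $A_\ell$, since they depend only on first differences of $y_{\qc}$) ensures that $\min_{\ell = 1,\dots,N} A_\ell = \min_{\ell \in \Z} A_\ell = \min_{\ell = -N+1,\dots,N} A_\ell$, so the step is justified. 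One should also note that $A_\ast(y_{\qc})$ need not be positive in general; the statement as written is a bound with this (possibly nonpositive) constant, and positivity is an additional hypothesis to be verified a posteriori in the applications.

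There is no real obstacle here — this proposition is the trivial consequence of the preceding algebraic manipulation. The substantive work (which the authors presumably defer to the next result) lies in converting this crude bound, which uses only the pointwise sign of $B_\ell$ and the worst-case value of $A_\ell$, into the genuinely useful coercivity estimate \eqref{Stability-1} of the form $E_{\a}''(y_{\qc})[v,v] \ge c_\a(y_{\qc}) \|v'\|_{L^2}^2$ with a sharp, computable constant $c_\a(y_{\qc})$; that will require retaining the strain-gradient term and relating the discrete $\ell_\eps^2$ norm of $v'$ to the $L^2$ norm, rather than simply throwing information away as we do here.
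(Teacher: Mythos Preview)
Your argument is correct and is exactly what the paper has in mind: the paper gives no separate proof, simply calling the proposition ``an immediate consequence'' of the reformulation $E_\a''(y)[v,v] = \eps\sum A_\ell |v_\ell'|^2 + \eps\sum B_\ell |v_\ell''|^2$ together with $B_\ell \ge 0$.

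Two small corrections to your commentary. First, periodicity makes $A_\ell$ only $2N$-periodic, not $N$-periodic, so your justification for replacing $\min_{\ell=1,\dots,N}$ by $\min_{\ell=-N+1,\dots,N}$ is incomplete; the range in the statement is almost certainly a typo and should cover a full period. Second, your closing speculation misreads the setup: by the identification in \S\ref{sec:notation_latticefcns} (lattice functions are identified with their piecewise affine interpolants), one has $\|v'\|_{\ell_\eps^2} = \|v'\|_{L^2}$ \emph{exactly}, so this proposition already \emph{is} the coercivity estimate \eqref{Stability-1} with $c_\a(y_\qc) = A_\ast(y_\qc)$, and the paper does no further sharpening --- $A_\ast$ is used directly as the stability constant in Theorem~\ref{th:grad_error}.
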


% \begin{proof}
% If $\min_{\ell} (y_{\qc})'_{\ell} \ge {r_\ast}/2$, then
% \begin{align*}
% \qquad  E''_{\a}(y_{\qc})[v,v] 
% \ge \eps \sum_{\ell=1}^{N} A_\ell(y_{\qc}) |v'_{\ell}|^2
% \ge A_{\ast}(y_{\qc}) \eps \sum_{\ell=1}^{N}|v'_{\ell}|^2
% = A_{\ast}(y_{\qc}) \| v' \|_{\ell^2_\eps}^2.  \qquad \qquad \qquad
% \qedhere
% \end{align*}
% \end{proof}

\begin{remark}
  Since the minimum in the definition of $A_*$ is taken over $2N$
  lattice sites, it appears at first glance that $A_*$ is expensive to
  evaluate. However, exploiting the fact that $y_\qc$ is piecewise
  affine, one can evaluate $A_*$ in $O(K)$ operations: 

  {\it Case i: } If ${\rm dist}(\eps (\ell-1/2), \{ x_k^h\}) < {\frac{3}{2}\eps}$
  then we evaluate $A_\ell(y_\qc)$ using
  \eqref{StabilityConstant-1}. There are $O(K)$ lattice sites of this
  type.

  {\it Case ii: } If ${\rm dist}(\eps (\ell-1/2), \{ x_k^h \}) \geq
  {\frac{3}{2}\eps}$ then
  \begin{displaymath}
    A_\ell(y_\qc) = \phi''(y_h'|_{T_k^h}) + 4\phi''(2y_h'|_{T_k^h}),
  \end{displaymath}
  that is, we only need to evaluate this formula once for each
  element.
\end{remark}

\medskip

\subsection{Estimates for the hessian}
Before we present our main theorems, we state two useful auxiliary
results: a local bound and a local Lipschitz bound on $E''_{\a}$. The
proofs are straightforward and are therefore omitted.

\begin{lemma}
  \label{th:lip_hess}
  Let $y, z \in \Ys^{\eps}$ such that $\min_\ell y_\ell' \geq \mu$ and
  $\min_\ell z_\ell' \geq \mu$ for some constant $\mu > 0$, then
 \begin{align*}
   \big| \Es_\a''(y)[v, w] \big| \leq\,& C_{\rm H} \| v' \|_{\ell^2_\eps}
   \| w' \|_{\ell^2_\eps}, \qquad \text{and} \\
    \big| \{\Es_{\a}''(y) - \Es_{\a}''(z)\}[v, w] \big|
    \leq\,& C_{\rm Lip} \| y' - z' \|_{\ell_\eps^\infty} \| v'\|_{\ell^2_\eps}
    \|w'\|_{\ell^2_\eps}
    \qquad \forall v, w \in \Us,
  \end{align*}
  where $C_{\rm H} := M_2(\mu) + 4 M_2(2\mu)$ and $C_{\rm Lip} :=
  M_3(\mu) + 8 M_3(2\mu)$ and $M_j(t) := \max_{s \geq t}
  |\phi^{(j)}(s)|$.
\end{lemma}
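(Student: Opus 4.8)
The plan is to work directly from the explicit quadratic form
\begin{displaymath}
  \Es_\a''(y)[v,w] = \eps \sum_{\ell=-N+1}^N \phi''(y_\ell')\, v_\ell' w_\ell'
  + \eps \sum_{\ell=-N+1}^N \phi''(y_{\ell-1}' + y_\ell')\,
  (v_{\ell-1}' + v_\ell')(w_{\ell-1}' + w_\ell'),
\end{displaymath}
treating the first- and second-neighbour sums separately. For the uniform bound, I would bound each coefficient $|\phi''(y_\ell')| \le M_2(\mu)$ and $|\phi''(y_{\ell-1}'+y_\ell')| \le M_2(2\mu)$, using the hypothesis $\min_\ell y_\ell' \ge \mu$ (which gives $y_{\ell-1}'+y_\ell' \ge 2\mu$), then apply Cauchy--Schwarz in $\ell^2_\eps$ to each sum. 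The only mild subtlety is the shifted factor $v_{\ell-1}'+v_\ell'$: by the triangle inequality and the periodic shift invariance of the $\ell^2_\eps$ norm, $\| (v_{\ell-1}'+v_\ell')_\ell \|_{\ell^2_\eps} \le 2\|v'\|_{\ell^2_\eps}$, so the second-neighbour term contributes at most $4 M_2(2\mu) \|v'\|_{\ell^2_\eps}\|w'\|_{\ell^2_\eps}$. Adding the two contributions gives $C_{\rm H} = M_2(\mu) + 4 M_2(2\mu)$.

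For the Lipschitz bound, I would write the difference coefficient-wise. In the first-neighbour sum the coefficient difference is $\phi''(y_\ell') - \phi''(z_\ell')$, and since $\phi \in C^3$ and both arguments lie in $[\mu,\infty)$, the mean value theorem bounds this by $M_3(\mu)\,|y_\ell' - z_\ell'| \le M_3(\mu)\, \|y'-z'\|_{\ell^\infty_\eps}$. Similarly, in the second-neighbour sum the coefficient difference $\phi''(y_{\ell-1}'+y_\ell') - \phi''(z_{\ell-1}'+z_\ell')$ is bounded by $M_3(2\mu)\,|(y_{\ell-1}'-z_{\ell-1}') + (y_\ell'-z_\ell')| \le 2 M_3(2\mu)\, \|y'-z'\|_{\ell^\infty_\eps}$. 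Pulling the $\ell^\infty$ factor out of the sum and applying Cauchy--Schwarz as before (again paying the factor $2$ twice for each shifted test-function factor in the second-neighbour term) yields the coefficient $M_3(\mu) + 8 M_3(2\mu) = C_{\rm Lip}$, completing the estimate.

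I do not anticipate any real obstacle: the argument is a bookkeeping exercise in tracking the constants $2$ coming from the shifted differences $v_{\ell-1}'+v_\ell'$ and the use of the mean value theorem on $\phi''$. The one point that must be stated carefully is that the quantities $M_j(\mu)$ and $M_j(2\mu)$ are finite precisely because the relevant discrete gradients are bounded below (by $\mu$ and $2\mu$ respectively) — this is where the hypothesis $\min_\ell y_\ell',\, \min_\ell z_\ell' \ge \mu$ enters, and without it the suprema of $|\phi^{(j)}|$ near $0$ would be infinite for a Lennard-Jones type potential. Given the straightforward nature of the estimates, this is exactly the kind of lemma for which the paper says "the proofs are straightforward and are therefore omitted."
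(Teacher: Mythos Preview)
Your proposal is correct and is precisely the straightforward computation the paper has in mind when it writes ``the proofs are straightforward and are therefore omitted.'' The tracking of the constants --- the factor $4$ from $\|v_{\ell-1}'+v_\ell'\|_{\ell^2_\eps}\le 2\|v'\|_{\ell^2_\eps}$ applied to both test functions, and the extra factor $2$ in the Lipschitz estimate from $|(y_{\ell-1}'-z_{\ell-1}')+(y_\ell'-z_\ell')|\le 2\|y'-z'\|_{\ell^\infty_\eps}$ --- matches the stated $C_{\rm H}$ and $C_{\rm Lip}$ exactly.
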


\section{A Posteriori Error Estimates}
\label{Sec:A_Posteriori_Error}
\subsection{A posteriori error estimate for the solution}
We will {\em assume} the existence of an atomistic solution $y_\a$ in
a neighbourhood of $y_\qc$ (cf. \eqref{eq:w1inf_apriori_assmpt}), and
estimate the error $y_\a - I_\eps y_\qc$. It is in principle possible
to rigorously prove the existence of such a solution $y_\a$ in a
neighbourhoodo f $y_\qc$, following for example \cite{Ortner:2008a,
  Ortner:arXiv:0911.0671}, however, this would require substantial
additional technicalities. 
% As a matter of fact our a priori assumption
% \eqref{eq:w1inf_apriori_assmpt} can be thought of as a ``saturation
% assumption'' commonly employed in nonlinear finite element analysis.

% The following theorem and its proof essentially follows the same line
% as \cite[Theorem 4.1]{Wang:2011a}.

\begin{theorem}
  \label{th:grad_error}
  Let $y_{\qc}$ be a solution of the QC problem \eqref{LocalMinimumQC}
  with $\min_\ell (y_{\qc})'_\ell \geq r_* / 2$ and $A_*(y_{\qc}) >
  0$, where $A_*$ is defined in the statement of Lemma
  \ref{th:stab_lemma}. Suppose, further, that $y_\a$ is a solution of
  the atomistic model \eqref{Eq:LocalMinimaA} such that, for some
  $\tau > 0$,
  \begin{equation}
    \label{eq:w1inf_apriori_assmpt}
    \| y_\a' - y_{\qc}' \|_{L^\infty(\Omega)} \le \tau.
  \end{equation}
  If $\tau$ is sufficiently small, then we have the error estimate
  \begin{equation}
    \label{eq:main_errest}
    \| y_\a' - I_\eps y_{\qc}' \|_{L^2(\Omega)} \leq
    {\smfrac{2}{A_*(y_\qc)}} 
    \big( \eta(y_{\qc})+\eta^f + \eta^q \big),
  \end{equation}
  where $\eta$ is defined in \eqref{ResidualStoredEnergy} and $\eta^f$
  and $\eta^q$ are defined in \eqref{ResidualExternalForce}.
\end{theorem}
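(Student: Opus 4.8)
The plan is to combine the residual estimate (Theorem~\ref{Theo:ResidualStoredEnergy} together with Proposition~\ref{Theo:RisidualExternalForce}) with the a~posteriori stability bound (Proposition~\ref{th:stab_lemma}) via a standard inverse-function-theorem / fixed-point argument, of the kind used in the a~priori analysis of \cite{Ortner:2008a, Ortner:arXiv:0911.0671}. The key observation is that by definition $R[v] = E'_\a(y_\qc)[v]$ and, since $y_\a$ solves \eqref{Eq:LocalMinimaA} with $y_\a'>0$, also $E'_\a(y_\a)[v]=0$; hence $R$ measures exactly the failure of $y_\qc$ to solve the atomistic equation. We already know $\|R\|_{\Us^{-1,2}} \le \|R_{\rm int}\|_{\Us^{-1,2}} + \|R_{\rm ext}\|_{\Us^{-1,2}} \le \eta(y_\qc) + \eta^f + \eta^q$ from the triangle inequality applied to the decomposition $R = R_{\rm int}+R_{\rm ext}$.

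First I would set $e := y_\a - I_\eps y_\qc \in \Us^\eps$ (note $I_\eps y_\qc \in \Ys^\eps$ since $y_\qc$ is piecewise affine and $\Ts^h$ has atomistic resolution only where needed; more carefully, $I_\eps y_\qc - Fx \in \Us^\eps$). Writing the atomistic residual along the segment joining $I_\eps y_\qc$ to $y_\a$, for any $v \in \Us^\eps$,
\begin{align*}
  E'_\a(y_\a)[v] - E'_\a(I_\eps y_\qc)[v]
  = \int_0^1 E''_\a\big(I_\eps y_\qc + t e\big)[e, v] \dt,
\end{align*}
and since the left-hand side equals $-E'_\a(I_\eps y_\qc)[v]$. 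Now $E'_\a(I_\eps y_\qc)[v]$ differs from $R[v]=E'_\a(y_\qc)[v]$ only through the difference between $y_\qc$ and its atomistic interpolant $I_\eps y_\qc$ in the continuum region; I would argue this difference is itself controlled (it contributes to, or is already absorbed in, the residual terms — this needs a short lemma, or one simply redefines the residual at $I_\eps y_\qc$ and checks the bounds of Section~\ref{Sec:Residual_Analysis} still hold, since the stored-energy residual estimate was stated for a general $y_h\in\Ys^h$ and the interpolation only affects the external part). Thus $\int_0^1 E''_\a(I_\eps y_\qc + te)[e,v]\dt = -R[v]$ up to controlled terms.

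Next I would split the Hessian integral as $E''_\a(y_\qc)[e,v]$ plus an error $\int_0^1 \{E''_\a(I_\eps y_\qc + te) - E''_\a(y_\qc)\}[e,v]\dt$, which by the Lipschitz bound in Lemma~\ref{th:lip_hess} is bounded by $C_{\rm Lip}(\tau + \|e'\|_{\ell^\infty_\eps})\|e'\|_{\ell^2_\eps}\|v'\|_{\ell^2_\eps}$, using \eqref{eq:w1inf_apriori_assmpt} and an inverse estimate $\|e'\|_{\ell^\infty_\eps}\le C\eps^{-1/2}\|e'\|_{\ell^2_\eps}$ (or better, a bound of $\|e'\|_{L^\infty}$ directly by $\tau$ plus an interpolation term). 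Then, testing with $v=e$ and invoking the stability estimate \eqref{Stability-1} in the form of Proposition~\ref{th:stab_lemma}, $A_*(y_\qc)\|e'\|_{\ell^2_\eps}^2 \le E''_\a(y_\qc)[e,e]$, I obtain
\begin{align*}
  A_*(y_\qc)\|e'\|_{\ell^2_\eps}^2 \le \|R\|_{\Us^{-1,2}}\|e'\|_{\ell^2_\eps}
  + C_{\rm Lip}\big(\tau + \|e'\|_{L^\infty}\big)\|e'\|_{\ell^2_\eps}^2 + (\text{interp.\ terms}).
\end{align*}
For $\tau$ sufficiently small the quadratic perturbation term is absorbed into the left-hand side (this is where the smallness hypothesis and a bootstrap/continuation argument on $\|e'\|$ enter — formally one sets up a fixed-point map and shows it is a contraction on a small ball), leaving $\frac{A_*}{2}\|e'\|_{\ell^2_\eps}^2 \le \|R\|_{\Us^{-1,2}}\|e'\|_{\ell^2_\eps}$, i.e.\ $\|e'\|_{\ell^2_\eps}\le \frac{2}{A_*}\|R\|_{\Us^{-1,2}} \le \frac{2}{A_*}(\eta(y_\qc)+\eta^f+\eta^q)$, which is \eqref{eq:main_errest} after identifying $\|e'\|_{\ell^2_\eps}=\|y_\a'-I_\eps y_\qc'\|_{L^2(\Omega)}$.

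The main obstacle I expect is the bookkeeping around $I_\eps y_\qc$ versus $y_\qc$: the residual $R$ was defined as $E'_\a(y_\qc)[v]$, but the fixed-point argument naturally wants to linearise about the genuinely admissible interpolant $I_\eps y_\qc$, and one must verify that $\|E'_\a(y_\qc) - E'_\a(I_\eps y_\qc)\|_{\Us^{-1,2}}$ is either zero (it is, on bonds lying in $\Omega_\a$ and on bonds lying within a single continuum element, exactly as in the proof of Theorem~\ref{Theo:ResidualStoredEnergy}) or controlled by the same element residual quantities — so in fact this difference only lives on bonds crossing continuum element boundaries and is already accounted for. A secondary technical point is making the smallness-of-$\tau$ continuation argument rigorous (showing the set of $t\in[0,1]$ for which a solution with small error exists is open, closed and nonempty), but this is routine given the Lipschitz and stability bounds and can be handled exactly as in \cite{Ortner:arXiv:0911.0671}.
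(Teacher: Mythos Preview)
Your approach is essentially the same as the paper's: set $e = y_\a - I_\eps y_\qc$, linearise $E'_\a$ along the segment, bound the right-hand side by the residual estimates, bound the left-hand side below via stability and the Hessian Lipschitz bound, and divide through. Three of your complications are unnecessary, however. First, the ``bookkeeping'' around $I_\eps y_\qc$ versus $y_\qc$ is a non-issue: $\Es_\a$ and $E'_\a$ depend only on the values of their argument at lattice sites, so $E'_\a(I_\eps y_\qc) = E'_\a(y_\qc) = R$ identically, and the paper simply notes this. Second, no fixed-point or continuation argument is needed, because the existence of $y_\a$ satisfying \eqref{eq:w1inf_apriori_assmpt} is \emph{assumed}, not derived; the paper uses the mean value theorem once to get a single $\theta \in \conv\{y_\a, I_\eps y_\qc\}$ with $E''_\a(\theta)[e,e] = -E'_\a(I_\eps y_\qc)[e]$. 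Third, you do not need an inverse estimate or a bootstrap on $\|e'\|_{L^\infty}$: since $\theta' - y_\qc'$ is a convex combination of $0$ and $y_\a' - y_\qc'$, the assumption \eqref{eq:w1inf_apriori_assmpt} gives directly $\|\theta' - y_\qc'\|_{L^\infty} \le \tau$, so the Lipschitz correction is bounded by $C_{\rm Lip}\,\tau\,\|e'\|_{L^2}^2$ and is absorbed for $\tau \le A_*(y_\qc)/(2C_{\rm Lip})$.
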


\begin{proof}
  Let $e := y_\a - I_\eps y_\qc$. We require that $\tau \leq \smfrac12
  \min y_\qc'$, then by the mean value theorem we know that there
  exists $\theta \in {\rm conv}\{y_\a, I_\eps y_{\qc}\}$, such that
  \begin{align*}
    E''_{\a}(\theta)[e,e] = & E'_{\a}(y_\a)[e] - E'_{\a}(I_\eps y_{\qc})[e]
    \nonumber\\
    =\,& E'_\qc(y_\qc)[I_h e] - E'_\a(I_\eps y_\qc)[e] \\
    = & \big( \Es'_{\qc}(y_\qc)[I_h e] -
    \Es'_{\a}(I_\eps y_{\qc})[e] \big) 
    -\big( \< f, I_h e\>_h -
    \<f, e\>_\eps \big).
  \end{align*}
  In this proof we write $\Es_\a(I_\eps y_\qc)$ to emphasize that we are
  comparing $y_\a'$ with $I_\eps y_\qc'$.

  Applying Theorem \ref{Theo:ResidualStoredEnergy} and Proposition
  \ref{Theo:RisidualExternalForce}, the two groups are respectively
  bounded by
  \begin{align*}
    \big| \Es'_{\qc}(y_\qc)[I_h e] -
    \Es'_{\a}(I_\eps y_{\qc})[e] \big| \leq\,& \eta(y_\qc) \| e'
    \|_{L^2}, \quad \text{and} \\
    \big| \< f, I_h e\>_h -
    \<f, e\>_\eps \big| \leq\,& (\eta^f + \eta^q) \| e' \|_{L^2},
  \end{align*}
  and hence we obtain
  \begin{equation}
    \label{ErrorEstimatShort}
    \Es''_{\a}(\theta)[e,e] \le \big(\eta(y_{\qc}) +
    \eta^f + \eta^q \big) \| e'\|_{L^2}.
  \end{equation}
  
  Next, we compute a lower bound on $\Es''_{\a} (\theta) [e,e]$. Using
  the Lipschitz estimate given in Lemma~\ref{th:lip_hess}, Proposition
  \ref{th:stab_lemma} together with our assumption that $y_\qc' \geq
  r_* / 2$, and the a priori bound \eqref{eq:w1inf_apriori_assmpt}, we
  have
  \begin{align*}
    \Es''_{\a} (\theta) [e,e] \geq\,& \Es''_\a(y_\qc)[e,e] - C_{\rm
      Lip} \| y_\a' - y_\qc' \|_{L^\infty} \| e' \|_{L^2}^2  \\
    \geq\,& \big(A_*(y_\qc) - C_{\rm Lip} \tau \big) \| e' \|_{L^2}^2.
  \end{align*}
  Hence, we if require that $\tau \leq A_*(y_\qc) / (2 C_{\rm Lip})$
  (since $C_{\rm Lip}$ is bounded as $\tau$ decreases this is
  satisfied for $\tau$ sufficiently small), then we arrive at
  \begin{displaymath}
    \smfrac12 A_*(y_\qc) \| e' \|_{L^2}^2 \leq \big(\eta^e(y_{\qc}) +
    \eta^f + \eta^q\big) \| e'\|_{L^2}.
  \end{displaymath}
  Dividing through by $\| e' \|_{L^2}$ yields the stated estimate.
\end{proof}

\subsection{A posteriori error estimate for the energy}
An important quantity of interest is the total energy of the system
being approximated. In this section, we derive an a posteriori
estimate for the energy difference $E_{\a} (y_\a) - E_{\qc}
(y_{\qc})$. To that end we decompose the energy difference into
\begin{equation}
  \label{eq:energy:decomp}
  \begin{split}
    \big|E_{\a}(y_\a) - E_{\qc} (y_{\qc})\big| =\,& \big|E_{\a}(y_\a) -
    E_{\a}(I_\eps y_{\qc})\big|
    + \big| \Es_{\a}(I_\eps y_{\qc}) - \Es_{\qc} (y_{\qc}) \big|\\
    & + \big| \<f, I_\eps y_\qc-Fx\>_\eps - \<f, y_\qc-Fx\>_h \big| 
  \end{split}
\end{equation}
and analyze each component separately.

For the first group on the right-hand side of
\eqref{eq:energy:decomp}, the result is standard:

\begin{lemma}
  \label{th:at_energy_est}
  Let $y, z \in \Ys^\eps$ such that $\min_\ell y_\ell' \geq \mu$ and
  $\min_\ell z_\ell' \geq \mu$ for some constant $\mu > 0$, and $y \in
  \argmin E_{\a}(\Ys^\eps)$, then
  \begin{equation}
    \big| E_{\a}(y) - E_{\a}(z) \big| \le \smfrac12 C_{\rm H} \| y' - z' \|^2_{L^2},
  \end{equation}
  where $C_{\rm H} = C_{\rm H}(\mu)$ was defined in Lemma
  \ref{th:lip_hess}.
\end{lemma}
\begin{proof}
  There exists $\theta \in {\rm conv}\{y, z\}$ such that
  \begin{displaymath}
    E_\a(z) = E_\a(y) + E_\a'(y)[z-y] + \smfrac12 E_\a''(\theta)[z-y,z-y].
  \end{displaymath}
  Since $E_\a'(y)[z-y] = 0$ and $E_\a'' = \Es_\a''$ applying Lemma
  \ref{th:lip_hess} yields the stated result.
\end{proof}

For the second group on the right-hand side of
\eqref{eq:energy:decomp}, the estimate is obtained from a
straightforward computation, using only the fact that the energy of a
bond lying entirely inside an element is exact in the QC energy. The
proof is omitted. Although the form of the error contributions $\mu_k$
looks complex at first glance, they are in fact straightforward to
compute.

\begin{lemma}
  \label{th:int_energy_est}
  For $y_h \in \Ys^h$ and $y'_h > 0$, we have
  \begin{align}
    \label{eq:int_energy_est}
    & \big| \Es_{\a}(y_h) - \Es_{\qc}(y_h) \big| \le \mu(y_h) := \sum_{k \in
      \Ks_\c} \mu_k, \qquad \text{where} \\
    \notag
    & \mu_k := \bigg|\sum_{\substack{b \in \Bs \\ x_k^h \in {\rm
          int}(b)}} 
    \bigg\{ \smfrac{|b \cap \Omega_\a|}{r_b}\Big[\phi(r_b D_b y_h) - \phi\big(r_b D_{b \cap \Omega_\a} y_h \big)\Big] 
            + \frac{1}{r_b}\int_{b \cap \Omega_\c^\per}\big[\phi(r_b D_b
            y_h) - \phi(r_b y_h')\big] \dx \bigg\} \bigg|.
  \end{align}
\end{lemma}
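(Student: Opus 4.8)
The plan is to expand the difference $\Es_\a(y_h) - \Es_\qc(y_h)$ bond by bond using the representations $\Es_\a(y_h) = \eps \sum_{b \in \Bs} \phi(r_b D_b y_h)$ and $\Es_\qc(y_h) = \eps \sum_{b \in \Bs}[a_b(y_h) + c_b(y_h)]$. First I would write, for each bond $b$,
\begin{displaymath}
  \phi(r_b D_b y_h) - a_b(y_h) - c_b(y_h)
  = \frac{|b \cap \Omega_\a|}{|b|}\big[\phi(r_b D_b y_h) - \phi(r_b D_{b\cap\Omega_\a} y_h)\big]
  + \frac{1}{|b|}\int_{b\cap\Omega_\c^\per}\big[\phi(r_b D_b y_h) - \phi(r_b y_h'(x))\big]\dx,
\end{displaymath}
which follows from $|b| = |b \cap \Omega_\a| + |b \cap \Omega_\c^\per|$ together with $|b\cap\Omega_\c^\per|\phi(r_b D_b y_h) = \int_{b\cap\Omega_\c^\per}\phi(r_b D_b y_h)\dx$. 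Note $|b| = r_b \eps$, so the prefactor $\eps/|b| = 1/r_b$, which produces the $1/r_b$ weights in the statement after multiplying by $\eps$.

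Next I would observe that the bond term above vanishes whenever $b$ lies entirely in $\Omega_\a$ (then $b\cap\Omega_\c^\per = \emptyset$ and $D_{b\cap\Omega_\a} y_h = D_b y_h$) or entirely inside a single continuum element $T_k^h \subset \Omega_\c^\per$ (then $|b\cap\Omega_\a| = 0$ and $y_h'$ is constant equal to $D_b y_h$ on $b$, so the integrand vanishes). This is exactly the argument already used for $R_1^b$ in the proof of Theorem \ref{Theo:ResidualStoredEnergy}: only bonds straddling a continuum element boundary or the a/c interface contribute, and by (T4) each such bond contains a unique node $x_k^h$ with $k \in \Ks_\c$ in its interior. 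Summing over $b$, regrouping the surviving terms according to the unique interior node they cross, multiplying by $\eps$, and applying the triangle inequality first across $k \in \Ks_\c$ and then keeping the inner sum over bonds inside the absolute value, yields $|\Es_\a(y_h) - \Es_\qc(y_h)| \le \sum_{k\in\Ks_\c}\mu_k$ with $\mu_k$ as defined.

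This is a genuinely routine computation with no real obstacle; the only point requiring a little care is bookkeeping the weights $\eps/|b| = 1/r_b$ and confirming that the "bond entirely inside an element is exact" claim is literally the same vanishing argument as for $R_1^b$, so that the restriction of the sum to $k \in \Ks_\c$ is justified by the identical use of (T4). For this reason the authors omit the proof, and I would do likewise, perhaps recording only the per-bond identity above and the remark that the vanishing and regrouping steps mirror the proof of Theorem \ref{Theo:ResidualStoredEnergy}.
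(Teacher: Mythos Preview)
Your proposal is correct and matches the paper's approach: the paper explicitly states that the proof is omitted because it is ``obtained from a straightforward computation, using only the fact that the energy of a bond lying entirely inside an element is exact in the QC energy,'' which is precisely the per-bond identity plus the vanishing/regrouping argument you describe, mirroring the treatment of $R_1^b$ in Theorem~\ref{Theo:ResidualStoredEnergy}.
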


Finally, the third group on the right-hand side of
\eqref{eq:energy:decomp} can be estimated similarly as the external
residual, however since the ``test function'' $u_h = y_h - F x$ is now
known explicitly, some additional structure can be exploited. Note
that the error due to quadrature is again of higher order.

\begin{lemma}
  \label{th:ext_energy}
  Let $y_h \in \Ys^h$ and $u_h := y_h - Fx$, then 
  \begin{equation}
    \label{eq:err_ext_energy}
    \big| \<f, u_h\>_h - \< f, I_\eps u_h \>_\eps \big| \le \mu^f(y_h) +
    \mu^q(y_h),
  \end{equation}
  where $\mu^f(y_h)$ is the error due to external forces, and
  $\mu^q(y_h)$ the error due to quadrature. They are, respectively,
  defined as follows:
  \begin{align*}
    \mu^f(y_h) :=\,& \sum_{k \in \Ks_\c} \mu^f_k, \quad
    \text{where} \quad \mu^f_k := \smfrac{\eps}{2} \| f \|_{L^1(\omega_k)} \big|
    [u_h']_{x_k^h} \big|, \quad \text{and} \\
    \mu^q(y_h) :=\,& \sum_{k : T_k^h \subset \tilde\Omega_\c} \mu^q_k, \quad \text{where}
    \quad
    \mu_k^q := \smfrac{\eps^2}{4} \big\| (f I_\eps u_h)''
    \big\|_{L^1(T_k^h)} + \smfrac{h_k^2}{4} \big\| (f u_h)''
    \big\|_{L^1(T_k^h)}, 
  \end{align*}
  where the second derivative $(f I_\eps u_h)''$ is to be understood
  in the piecewise sense, $\omega_k = T_\ell^\eps$ if $(\ell-1)\eps <
  x_k^h < \ell\eps$ and $\omega_k = \emptyset$ if no such $(\ell-1)
  \in \Z$ exists, and $[u_h']_{x_k^h} := u_h'(x_k^h+) - u_h'(x_k^h-)$.
\end{lemma}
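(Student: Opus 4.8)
I would follow the proof of Proposition~\ref{Theo:RisidualExternalForce} almost line by line; the single new feature is that the ``test function'' $u_h = y_h - Fx$ is now explicitly known and piecewise affine on $\Ts^h$, so the interpolation defect $u_h - I_\eps u_h$ can be evaluated exactly rather than estimated via a Poincar\'e inequality, and this is precisely what turns the leading error term into the jump quantity $\mu^f$ instead of something involving $\|u_h'\|_{L^2}$.

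\emph{Step 1 (rewriting and localisation).} First I would note that $\<f,u_h\>_h = \int_\Omega I_h(f u_h)\dx$ by definition, and $\<f, I_\eps u_h\>_\eps = \eps\sum_\ell f_\ell u_h(\eps\ell) = \int_\Omega I_\eps(f I_\eps u_h)\dx$ by the periodic trapezoidal identity. Exactly as in Section~\ref{SectionResidualExternalForce}, on the micro-elements making up $\Omega_\a\setminus\tilde\Omega_\c$ the mesh has atomistic resolution with nodes on atomic sites, so there $I_\eps u_h = u_h$ and $I_h(f u_h) = I_\eps(f u_h) = I_\eps(f I_\eps u_h)$; hence $\<f,u_h\>_h - \<f, I_\eps u_h\>_\eps = \int_{\tilde\Omega_\c}[I_h(f u_h) - I_\eps(f I_\eps u_h)]\dx$. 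Inserting $\pm f u_h$ and $\pm f I_\eps u_h$ gives the analogue of the splitting \eqref{eq:Rext:decomp},
\begin{align*}
  \<f, u_h\>_h - \<f, I_\eps u_h\>_\eps
  &= \int_{\tilde\Omega_\c} \big[ I_h(f u_h) - f u_h \big]\dx \\
  &\quad + \int_{\tilde\Omega_\c} \big[ f u_h - f I_\eps u_h \big] \dx
   + \int_{\tilde\Omega_\c} \big[ f I_\eps u_h - I_\eps(f I_\eps u_h) \big] \dx
   =: T_1 + T_2 + T_3.
\end{align*}

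\emph{Step 2 (the quadrature terms $T_1, T_3$).} On each element $T_k^h$ the product $f u_h$ is $C^2$ (as $u_h$ is affine there), and on each micro-element $T_\ell^\eps$ the product $f I_\eps u_h$ is $C^2$ (as $I_\eps u_h$ is affine there). Applying the elementary bound $\big|\int_T(g - I g)\dx\big| \le \smfrac14|T|^2\|g''\|_{L^1(T)}$ for the integral of the linear interpolation error of a $C^2$ function, and summing over the relevant elements, I would obtain $|T_1| \le \smfrac14\sum_{T_k^h\subset\tilde\Omega_\c} h_k^2\|(f u_h)''\|_{L^1(T_k^h)}$ and $|T_3| \le \smfrac{\eps^2}{4}\|(f I_\eps u_h)''\|_{L^1(\tilde\Omega_\c)}$; splitting the last norm over elements, these two contributions assemble into $\mu^q(y_h)$. (As with Proposition~\ref{Theo:RisidualExternalForce}, this piece is of higher order.)

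\emph{Step 3 (the main term $T_2$).} Since $u_h$ is affine on each element of $\Ts^h$, the defect $u_h - I_\eps u_h$ vanishes on every micro-element whose interior contains no mesh node $x_k^h$; moreover every such interior node lying in $\Omega_\a$ is an atomic site and therefore contributes nothing, so only the nodes $x_k^h$ with $k\in\Ks_\c$ remain, the relevant micro-element being $\omega_k$. On $\omega_k = (a,b)$ with $a < x_k^h < b$ and $b-a=\eps$, a short computation with the two affine branches of $u_h$ shows that $u_h - I_\eps u_h$ is the piecewise-affine ``tent'' that vanishes at $a,b$ and equals $-\smfrac{(b-x_k^h)(x_k^h-a)}{\eps}[u_h']_{x_k^h}$ at $x_k^h$, hence $\|u_h - I_\eps u_h\|_{L^\infty(\omega_k)} \le \smfrac{\eps}{4}\big|[u_h']_{x_k^h}\big|$. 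Then
\begin{align*}
  |T_2| &\le \sum_{k\in\Ks_\c} \|f\|_{L^1(\omega_k)}\,\|u_h - I_\eps u_h\|_{L^\infty(\omega_k)} \\
        &\le \sum_{k\in\Ks_\c} \smfrac{\eps}{4}\|f\|_{L^1(\omega_k)}\,\big|[u_h']_{x_k^h}\big| \le \mu^f(y_h),
\end{align*}
and adding the three bounds yields \eqref{eq:err_ext_energy}. The main obstacle is not analytic --- Steps~1--2 are routine and Step~3 is an elementary interpolation identity --- but the bookkeeping at the a/c interface: deciding whether an interface node sits on an atomic site and how the at most one micro-element that straddles the interface is accounted for. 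This is the issue that forces the enlargement $\tilde\Omega_\c \supset \Omega_\c$ and is dealt with precisely as in Section~\ref{SectionResidualExternalForce}.
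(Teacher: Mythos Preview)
Your proposal is correct and follows essentially the same route as the paper: the same localisation to $\tilde\Omega_\c$, the same three-term splitting $T_1+T_2+T_3$, and the same interpolation-error bounds for the two quadrature terms. The only cosmetic difference is in Step~3: the paper bounds $\int_{\omega_k} f(I_\eps u_h - u_h)\dx$ via $\smfrac{\eps}{2}\|f\|_{L^1(\omega_k)}\|u_h' - I_\eps u_h'\|_{L^\infty(\omega_k)}$ and then observes $\|u_h' - I_\eps u_h'\|_{L^\infty(\omega_k)}\le|[u_h']_{x_k^h}|$, whereas you compute the tent $u_h - I_\eps u_h$ explicitly and obtain the slightly sharper constant $\smfrac{\eps}{4}$ in place of $\smfrac{\eps}{2}$; both are bounded by $\mu_k^f$ as stated.
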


\begin{remark}
  1. It is straightforward to evaluate (possibly upper bounds of) the
  error contributions $\eta^f_k$ and $\eta^q_k$ with $O(K)$
  computational complexity. This is due to the fact that $u_h$ is
  piecewise linear on $T^h_k$, and $I_\eps u_h = u_h$ except in the
  neighourhoods $\omega_k$ of the element interfaces.

  2. For the purpose of mesh refinement, we will group the residual
  contribution of the elements touching the a/c interface, so that no
  error contributions is associated with the atomistic region, which
  cannot be further refined.
% , $I_\eps u_h(x) =
%   u_h(\ell \eps) + (x-\ell_k \eps) \big[ \theta_k u_h'|_{T_k} +
%   (1-\theta_k) u_h'|_{T_{k+1}} \big]$ and $(I_\eps u_h)' = \theta_k
%   u_h'|_{T_k} + (1-\theta_k) u_h'|_{T_{k+1}}$ on $\big( \ell_k \eps,
%   (\ell_k+1)\eps \big)$, where {\color{blue}$\theta_k := \frac{x^h_k -
%       \ell_k \eps}{\eps}$}.
\end{remark}

\begin{proof}
  Similarly as in the external residual estimate we write the external
  energy difference as
  \begin{displaymath}
    \<f, I_\eps u_h\>_\eps -
    \<f, u_h\>_h = \int_\Omega \big[ I_\eps(f I_\eps u_h) - I_h(f u_h)\big] \dx.
  \end{displaymath}
  Using $I_\eps(f v) = I_h(f I_h v)$ in $\Omega \setminus
  \tilde{\Omega}_\c$, we decompose this difference into
\begin{align}
  \int_\Omega \big[ I_\eps(fI_\eps u_h) - I_h(f u_h) \big] \dx  
  =\,& \int_{\tilde\Omega_\c} \big[ I_\eps(f I_\eps u_h ) - I_h(f u_h)\big]
  \dx \nonumber \\
  \label{eq:int_eest:10}
  = &\int_{\tilde\Omega_\c} \big[ I_\eps(f I_\eps u_h) - f I_\eps u_h \big] \dx 
  + \int_{\tilde\Omega_\c} \big[ f I_\eps u_h - f u_h \big] \dx \\
  & \nonumber
  + \int_{\tilde\Omega_\c} \big[ f u_h - I_h(f u_h) \big] \dx
\end{align}

Unlike in the proof proof of Proposition
\ref{Theo:RisidualExternalForce}, where $v_h$ was unknown, we can use
our explicit knowledge of $u_h$, to estimate the first and third
groups on the right-hand side of \eqref{eq:int_eest:10} as follows:
\begin{equation}
  \label{eq:int_eest:15}
  \begin{split}
    \bigg|\int_{\tilde\Omega_\c} \big[ I_\eps(f I_\eps u_h) - f I_\eps u_h \big]
    \dx\bigg| \leq\,& \big\| \smfrac{\eps^2}{4} (f I_\eps u_h)'' \big\|_{L^1}, \qquad
    \text{and} \\
    \bigg|\int_{\tilde\Omega_\c} \big[ I_h(f u_h) - f u_h \big]
    \dx\bigg| \leq\,& \big\| \smfrac{h^2}{4} (f u_h)'' \big\|_{L^1},
  \end{split}
\end{equation}
where the second derivatives are understood in a piecewise sense. 

To estimate the second group on the right-hand side of
\eqref{eq:int_eest:10}, we note that $I_\eps u_h = u_h$ except near
element boundaries. Upon defining $\omega_k$ and $[u_h']_{x_k}$ as in
the statement of the result,
%  $\ell_k := \max \{\ell \in \Z: \ell
% \eps \le x^h_k \}$ and $\omega_k := (\ell_k \eps, \ell_k\eps+\eps)$,
we have
\begin{align}
  \notag
  \int_{\tilde\Omega_\c} \big[ f I_\eps u_h - f u_h \big] \dx
  =\,& \sum_{\substack{k = 1, \dots, K \\ x_k^h \in \tilde\Omega_\c}} \int_{\omega_k}
  \big[ f I_\eps u_h - f u_h \big] \dx \\
  \label{eq:int_eest:20}
  \leq\,& \sum_{\substack{k = 1, \dots, K \\ x_k^h \in \tilde\Omega_\c}}
  \smfrac{\eps}{2} \| f \|_{L^1(\omega_k)} \big\| u_h' - I_\eps u_h'
  \big\|_{L^\infty(\omega_k)} \\
  \leq\,& \sum_{\substack{k = 1, \dots, K \\ x_k^h \in \tilde\Omega_\c}}
  \smfrac{\eps}{2} \| f \|_{L^1(\omega_k)} \big|[u_h']_{x_k}\big|.
\end{align}
It is now straightforward to rearrange the various error contributions
to obtain the stated result.
\end{proof}

Combining all the foregoing estimates yields an a posteriori
error estimate for the energy.

\begin{theorem}
  \label{th:energy_error_estimate}
  Suppose that the conditions of Theorem \ref{th:grad_error} are
  satisfied, then
  \begin{equation}
    \label{eq:energy_error}
    \big| E_\a(y_\a) - E_\qc(y_\qc) \big| \leq  \smfrac{4 C_{\rm H}}{A_*(y_\qc)^2} \big\{ \eta(y_\qc)^2 + (\eta^f)^2 +
    (\eta^q)^2 \big\} + \mu(y_\qc) +
    \mu^f(y_\qc) + \mu^q(y_\qc),
  \end{equation}
  where $\eta(y_\qc)$ is defined in \eqref{ResidualStoredEnergy},
  $\eta^f, \eta^q$ in \eqref{ResidualExternalForce}, $\mu(y_\qc)$ in
  \eqref{eq:int_energy_est} and $\mu^f(y_\qc), \mu^q(y_\qc)$
  in~\eqref{eq:err_ext_energy}.
\end{theorem}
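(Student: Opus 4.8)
The plan is to assemble the energy error estimate directly from the three-term decomposition \eqref{eq:energy:decomp}, bounding each group with the lemma already proved for it. For the first group $|E_\a(y_\a) - E_\a(I_\eps y_\qc)|$ I would invoke Lemma~\ref{th:at_energy_est} with $y = y_\a$ (which is a minimizer, so $E_\a'(y_\a)=0$) and $z = I_\eps y_\qc$; this requires the lower strain bound $\min_\ell(I_\eps y_\qc)_\ell' \geq \mu$ for a suitable $\mu$, which follows from the hypothesis $\min_\ell(y_\qc)_\ell' \geq r_*/2$ together with the smallness of $\tau$ inherited from Theorem~\ref{th:grad_error}, say $\mu = r_*/4$. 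This gives $|E_\a(y_\a) - E_\a(I_\eps y_\qc)| \leq \tfrac12 C_{\rm H}\|y_\a' - I_\eps y_\qc'\|_{L^2}^2$.

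Next I would feed in the a posteriori bound \eqref{eq:main_errest} from Theorem~\ref{th:grad_error}, namely $\|y_\a' - I_\eps y_\qc'\|_{L^2} \leq \tfrac{2}{A_*(y_\qc)}(\eta(y_\qc) + \eta^f + \eta^q)$. Squaring and using $(a+b+c)^2 \leq 3(a^2+b^2+c^2)$ — or simply keeping the squared sum and noting it is bounded by $3(\eta^2 + (\eta^f)^2 + (\eta^q)^2)$ — yields
\begin{displaymath}
  \big| E_\a(y_\a) - E_\a(I_\eps y_\qc) \big| \leq \smfrac{2 C_{\rm H}}{A_*(y_\qc)^2} \big( \eta(y_\qc) + \eta^f + \eta^q \big)^2 \leq \smfrac{6 C_{\rm H}}{A_*(y_\qc)^2} \big\{ \eta(y_\qc)^2 + (\eta^f)^2 + (\eta^q)^2 \big\}.
\end{displaymath}
(If one wants the sharper constant $4C_{\rm H}/A_*^2$ as written in the statement, one should not expand the square but instead keep $(\eta + \eta^f + \eta^q)^2$ and only at the very end bound it — I would double-check the constant bookkeeping here, since the stated $4$ versus the naive $2$ or $6$ is the one genuinely fiddly point.)

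For the second group $|\Es_\a(I_\eps y_\qc) - \Es_\qc(y_\qc)|$ I would apply Lemma~\ref{th:int_energy_est} with $y_h = y_\qc$; note $\Es_\qc$ depends only on the piecewise-affine function $y_\qc$ and $\Es_\a(I_\eps y_\qc)$ uses the atomistic interpolant, and the lemma is stated precisely for this pairing, giving the bound $\mu(y_\qc)$. The hypothesis $y_\qc' > 0$ needed by the lemma is implied by $\min_\ell(y_\qc)_\ell' \geq r_*/2 > 0$. For the third group $|\langle f, I_\eps y_\qc - Fx\rangle_\eps - \langle f, y_\qc - Fx\rangle_h|$ I would apply Lemma~\ref{th:ext_energy} with $u_h = y_\qc - Fx$, which directly produces $\mu^f(y_\qc) + \mu^q(y_\qc)$. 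Summing the three bounds over the decomposition \eqref{eq:energy:decomp} gives the stated estimate \eqref{eq:energy_error}.

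The main obstacle is not conceptual — every ingredient is a lemma already in hand — but rather (i) verifying that the smallness condition on $\tau$ carried over from Theorem~\ref{th:grad_error} simultaneously guarantees the lower-strain hypothesis $\min_\ell(I_\eps y_\qc)_\ell' \geq \mu$ needed for both Lemma~\ref{th:at_energy_est} and the definition of $C_{\rm H}$, and (ii) tracking the constant so that the coefficient of the squared residual terms is exactly $4C_{\rm H}/A_*(y_\qc)^2$ as claimed, which forces one to keep the sum $(\eta + \eta^f + \eta^q)^2$ intact through Lemma~\ref{th:at_energy_est} and only split it using $(a+b+c)^2 \le 2(a^2 + b^2 + c^2)$ \emph{after} absorbing the factor $\tfrac12 C_{\rm H} \cdot \tfrac{4}{A_*^2}$ — a routine but error-prone step that I would write out carefully.
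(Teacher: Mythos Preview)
Your approach is exactly the paper's: apply the decomposition \eqref{eq:energy:decomp}, bound the second and third groups directly by Lemmas~\ref{th:int_energy_est} and~\ref{th:ext_energy}, and bound the first group by combining Lemma~\ref{th:at_energy_est} with the squared a~posteriori gradient estimate from Theorem~\ref{th:grad_error}.

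One slip to flag: the inequality $(a+b+c)^2 \le 2(a^2+b^2+c^2)$ that you invoke at the end is false (take $a=b=c=1$); the sharp constant is $3$, which yields $\tfrac{6C_{\rm H}}{A_*(y_\qc)^2}$ rather than $\tfrac{4C_{\rm H}}{A_*(y_\qc)^2}$. The paper's own proof is equally casual about this constant --- its final displayed line reads $\tfrac{8}{A_*(y_\qc)^2}$, dropping $C_{\rm H}$ altogether --- so the coefficient $4C_{\rm H}/A_*^2$ in the stated theorem should not be taken as sharp. Your instinct that the constant is ``the one genuinely fiddly point'' is correct; just do not try to recover exactly $4$, since neither your argument nor the paper's actually does.
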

\begin{proof}
  The second term on the right-hand side of \eqref{eq:energy:decomp}
  is estimated by $\mu(y_h)$, which gives rise to the second term on
  the right-hand side of \eqref{eq:energy_error} (cf. Lemma
  \ref{th:int_energy_est}). The third term on the right-hand side of
  \eqref{eq:energy:decomp} is estimated by $\mu^f(y_h)$ and
  $\mu^q(y_h)$, which gives rise to the third and fourth terms on the
  right-hand side of \eqref{eq:energy:decomp} (cf. Lemma
  \ref{th:ext_energy}).

  Finally, using Lemma \ref{th:at_energy_est}, the first term on the
  right-hand side of \eqref{eq:energy:decomp} can be bounded above by
  \begin{displaymath}
    \big| E_\a(y_\a) - E_\a(I_\eps y_\qc) \big| \leq \smfrac12 C_{\rm
      H} \| y_\a' - I_\eps y_\qc' \|_{L^2}^2.
  \end{displaymath}
  Employing Theorem \ref{th:grad_error} we obtain
  \begin{displaymath}
    \big| E_\a(y_\a) - E_\a(I_\eps y_\qc) \big| \leq
    \smfrac{8}{A_*(y_\qc)^2} \big( \eta(y_\qc)^2 + (\eta^f)^2 +
    (\eta^q)^2 \big). \qedhere
  \end{displaymath}
\end{proof}

\section{Numerical Experiments}
\label{Numerics}
In this section, we present numerical experiments to illustrate the
results of our analysis, and highlight further features of our a
posteriori error estimates. In particular, we will propose an adaptive
mesh refinement algorithm, and show numerically that it achieves an
optimal ``convergence rate'' in terms of the number of degrees of
freedom. (Strictly speaking, we cannot speak of convergence rates
since the algorithm will eventually fully resolve the problem.)

Throughout this section we fix $F = 1$, $N = 25000$, and let $\phi$ be
the Morse potential
\begin{equation*}
  \phi(r) = \exp(-2\alpha(r-1))-2\exp(-\alpha(r-1)),
\end{equation*}
with the parameter $\alpha = 5$.  We defined the external force $f$ to
be
\begin{align*}
  f(x) :=  \cases{
    - 0.4 \frac{x+1/2}{x},
    &\text{for $-1/2 \le x < 0$},\\[2mm]
    0.4 \frac{1/2-x}{x},
    &\text{for $0 < x \le 1/2$.}
  }
\end{align*}
% \begin{align*}
%   f(x) := 
%   \left\{
%     \begin{array}{rl}
%       -0.1\frac{x}{\smfrac{1}{2}}\frac{1}{\smfrac{1}{2}-x} = -0.4 \frac{x}{1-2x},
%       &\text{for $0 \le x < \frac{1}{2}$},\\
%       0.1\frac{1-x}{\smfrac{1}{2}}\frac{1}{x -\smfrac{1}{2}} = 0.4 \frac{1-x}{2x-1},
%       &\text{for $\frac{1}{2} < x \le 1 $.}
%     \end{array} \right.
% \end{align*}
% Since $f$ is antisymmetric about $x = 1/2$ and $f(0) = f(1) = 0$, it
% follows that the condition $\sum_{\ell = 1}^N f(\eps\ell) = 0$ is
% always satisfied.

Note that $f(x)$ behaves essentially like $|x|^{-1}$, which is a
typical decay rate for elastic fields generated by long-ranged defects
in 2D/3D. (By contrast local perturbations decay exponentially in our
1D model.) Thus, introducing this force allows us to study the
performance of our adaptive algorithm in a setting that includes some
of the features of 2D/3D problems. The constant $0.4$ is fairly
arbitrary. It was chosen sufficiently large to achieve a significant
deformation, but sufficiently small to ensure that there exists an
elastic stable equilibrium configuration.

% We briefly explain the meaning of the external force. As is shown in the formulation, the external force consists of three parts. The first part is $\smfrac{1}{\smfrac{1}{2}-x}$ 
% when $0 \le x<\smfrac{1}{2}$ and $\smfrac{1-x}{\smfrac{1}{2}}$ when $\smfrac{1}{2} < x \le 1 $ which behaves as $\smfrac{1}{r}$ where $r$ is the distance from the middle of 
% the domain. This has the effect of creating a large deformation in the middle part of the system but will not cause singularity since there is no atom in the middle. By some 
% heuristic analysis, which is out of the scope of the present work and is thus omitted, we could show that this external force creates a deformation that mimics a 'vacancy' in one dimensional setting.
% The second part is $\frac{x}{\smfrac{1}{2}}$ and $\frac{1-x}{\smfrac{1}{2}}$ on the left and right of $x = \smfrac{1}{2}$ respectively. This ensures the force decays to $0$
% on the boundary of the domain so that no large deformation will occur at that position as a result of the rapid change (a 'kink') of the force. The third part is the constant $0.1$,
% which is chosen so that the solution of this problem is stable.

We will analyze the relative errors of the deformation gradient and of the
energy defined, respectively, by
\begin{align}  
  \label{RelativeError}
  \frac{\| y_{\qc}'-y_\a'\|_{L^2(\Omega)}}{\|y_\a'- F
    \|_{L^2(\Omega)}} \qquad \text{and} \qquad 
  \frac{|E_{\a}(y_\a) - E_{\qc}(y_{\qc})|}{|E_{\a}(y_\a) - E_{\a}(Fx)|}.
\end{align}
In all computations, we compare the QC solutions against the
(computable) exact solutions.

\subsection{A priori mesh refinement}
We will compare the adaptive algorithm introduced in the next
sub-section against a quasi-optimal a priori mesh refinement scheme,
which exploits the known qualitative behaviour of the solution. For
simplicity we keep the following discussions informal.

We expect that, away from the defect, the exact solution will
essentially behave like $|u''| \lesssim |f|$. We choose to atomistic
region the be of the form $(- M\eps, M\eps)$. Closely
following the analysis of \cite[Sec. 7.1]{OrtShap2010b} to optimize
the mesh $\Ts^h$ based on these assumptions, we obtain that the
(quasi-)optimal mesh size in the continuum region is given,
approximately, by
\begin{displaymath}
  % \label{OptimalMesh}
  h^*(x) = 2 \eps \Big|\frac{f(M\eps)}{f(x)}\frac{|x|}{M\eps}\Big|^{\frac{2}{3}}.
\end{displaymath}
The following algorithm generates a mesh with this quasi-optimal mesh
size. We only state the construction for the right-hand half of the
domain. The mesh will be symmetric about the centre $x = 0$. The
factor $2 \eps$ ensures that the restriction (T4) is observed.

\begin{algorithm}[A priori mesh refinement]
\label{Algo:CHMesh}
\begin{enumerate}
\item Add the vertices $0, \eps, \dots, \eps M$ to the mesh.
\item Let $x_k^h$ be the right-most vertex already in the mesh. If
  $x_k^h + h^*(x_k^h) > N$, stop.
\item Otherwise, add the vertex $x_k^h + h^*(x_k^h)$ to the
  mesh. Continue at (2). 
\end{enumerate}
\end{algorithm}

\subsection{Adaptive algorithm}
In our adaptive computations, we begin with a mesh that resolves the
``defect'' (i.e., has atomistic resolution near $x = 0$) but is coarse
elsewhere. We then employ the algorithm stated below to locally refine
the mesh where required and thus improve the quality of the solution.

Before we state the algorithm, we first define the error indicators
according to which we drive the mesh refinement. Node-based error
error constributions are split between neighbouring elements. Error
contributions from the atomistic region are associated with the
neighbouring continuum elements.

The element error indicators for the gradient-driven algorithm are
given by (cf. \eqref{ResidualStoredEnergy} and
\eqref{ResidualExternalForce})
\begin{displaymath}
  (\rho_k^\nabla)^2 := \smfrac{4}{A_*(y_\qc)} \cdot  \cases{ 
    3 \eta_{k-1}^2 + \smfrac{3}{2} \eta_k^2 + (\eta_k^f)^2 + (\eta_{k-1}^f)^2, & \text{if
    } x_{k-1}^h = R_\a, \\
    \smfrac{3}{2} \eta_{k-1}^2 + 3 \eta_k^2 + (\eta_k^f)^2 +
    (\eta_{k+1}^f)^2, & \text{if } x_k^h = L_\a, \\
    \smfrac{3}{2} \eta_{k-1}^2 + \smfrac32 \eta_k^2 +
    (\eta_k^f)^2, & \text{otherwise}.
  }
\end{displaymath}
Note that we have ignored the quadrature contributions. We have
carefully justified this omission in Section \ref{sec:frc_est_sing}.

The element error indicators for the energy-driven algorithm are given
by (cf. Theorem~\ref{th:energy_error_estimate})
\begin{displaymath}
  \rho_k^E := \smfrac{4 C_{\rm H}}{A_*(y_\qc)^2} \, (\rho_k')^2 + \cases{
    \mu_{k-1} + \smfrac12 \mu_k +
    \mu_{k-1}^f + \smfrac12 \mu_k^f + \mu_{k-1}^q + \mu_k^q, &
    \text{if } x_k^h = R_\a, \\
    \smfrac12\mu_{k-1} + \mu_k +
    \smfrac12\mu_{k-1}^f + \mu_k^f + \mu_{k}^q + \mu_{k+1}^q, &
    \text{if } x_k^h = L_\a, \\
    \smfrac12\mu_{k-1} + \smfrac12\mu_k +
    \smfrac12\mu_{k-1}^f + \smfrac12\mu_k^f + \mu_{k}^q, &
    \text{otherwise}.\\
  }
\end{displaymath}
Since the energy error is formally second order, we did include the
quadrature contribution $\mu^q$ in this case.

In the following algorithm, let $\rho_k \in \{\rho_k^\nabla,
\rho_k^E\}$. Our algorithm is based on established ideas from the
adaptive finite element literature \cite{dorfler, mns2000}.

\begin{algorithm}[A posteriori mesh refinement]
\label{Algo:EDMesh}
\begin{enumerate}
\item Add the nodes $0, \pm \eps, \dots, \pm 3\eps$ to the mesh. Add
  the nodes $\{x_0^h = -1/2, x_K^h = 1/2\}$ to the mesh.
\item {\it Compute: } Compute the QC solution on the current mesh, compute the
  error indicators $\rho_k$. For $T_k^h \subset \Omega_\a$, set
  $\rho_k = 0$.
\item {\it Mark:} Choose a minimal subset $\mathcal{M} \subset \{1,
  \dots, K\}$ of indices such that
  \begin{equation}
    \sum_{k \in \mathcal{M}} \rho_k \le \frac12 \sum_{k = 1}^K \rho_k.
  \end{equation}
\item {\it Refine: } Bisect all elements $T_k^h$ with indices
  belonging to $\mathcal{M}$. \\
  If an element that needs to be refined is adjacent to the atomistic
  region, merge this element into the atomistic region and create a
  new atomistic to continuum interface.
\item If the resulting mesh reaches a prescibed maximal number of degrees of
  freedom, stop algorithm; otherwise, go to Step (2).
\end{enumerate}
\end{algorithm}

\subsection{Numerical Results}
We summarize the results of the computions with meshes generated by
the quasi-optimal refinement, and the adaptive algorithm with both
gradient- and energy-based error indicators.
\begin{figure}
  \includegraphics[width=11cm, bb = 55 160 650 650]{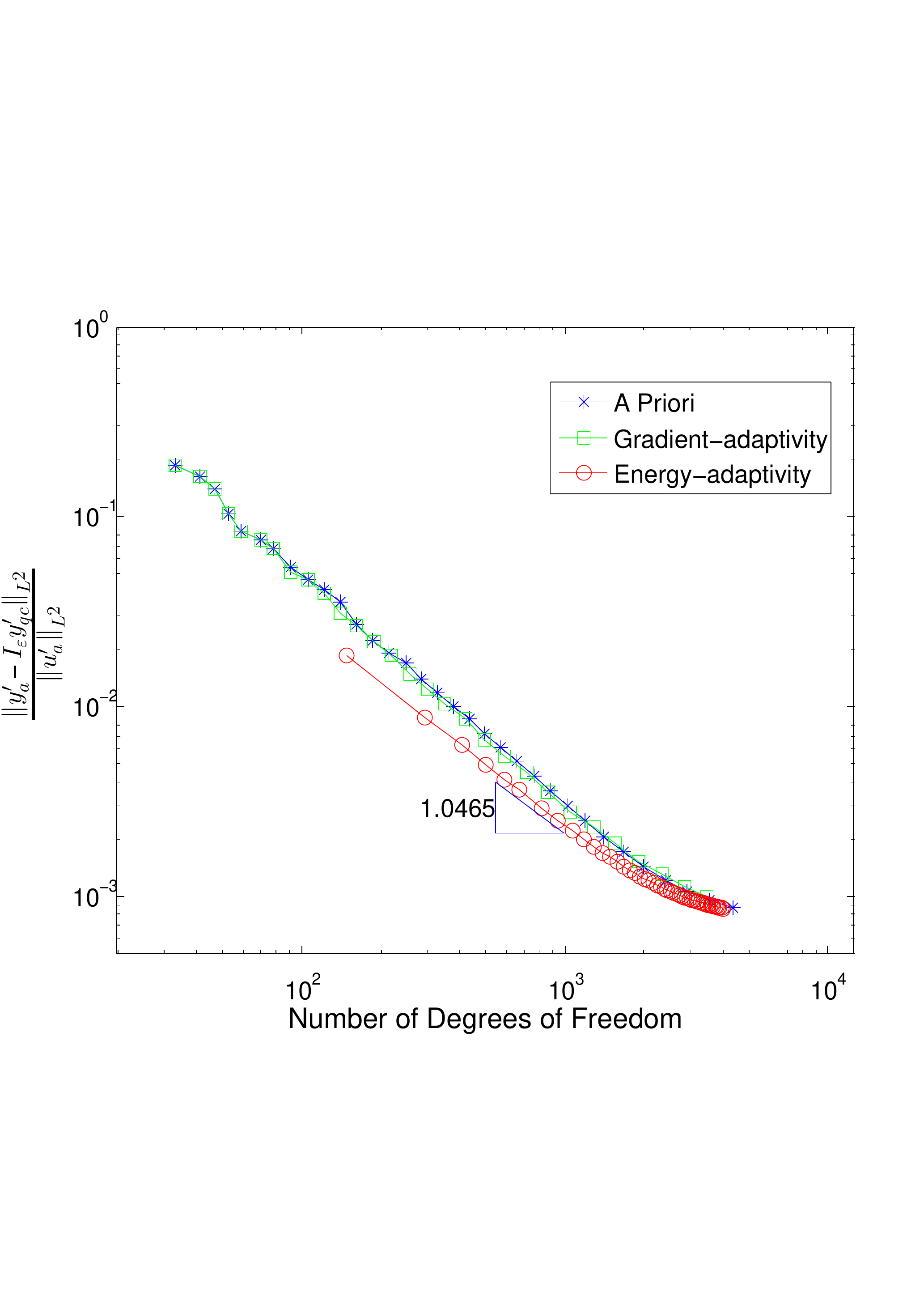}  
  \caption{Relative errors in the deformation gradient
    \eqref{RelativeError} plotted against the number of
    degrees of freedom for three types of mesh refinements.} 
  \label{Fig:Relative_Error_in_Gradient}
\end{figure}
\begin{figure}
  \includegraphics[width=11cm, bb = 55 160 650 650]{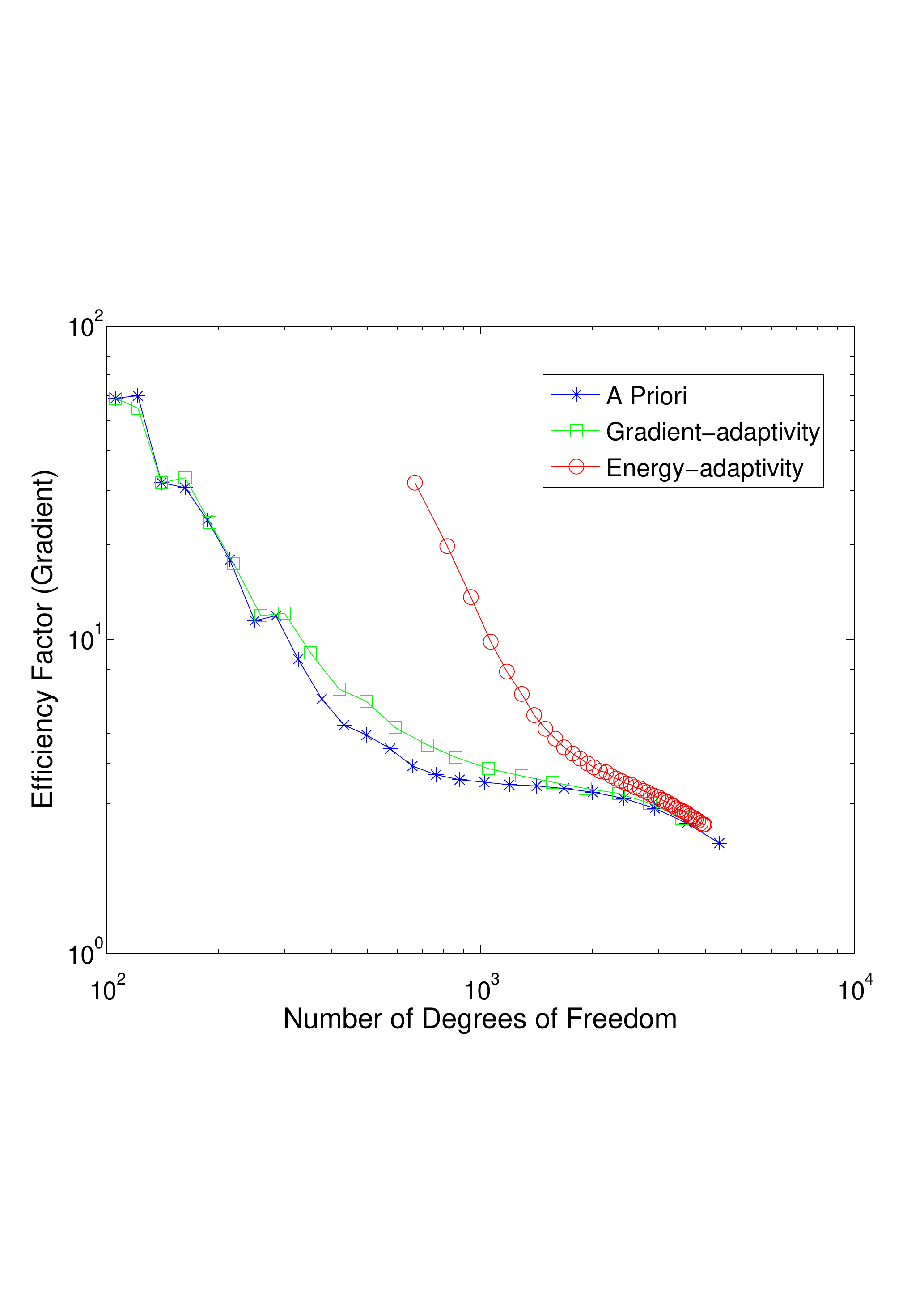}  
  \caption{Efficiency factors (gradient a posteriori estimate divided
    by actual error) plotted against the number of degrees of freedom
    for three types of mesh refinements.}
  \label{Fig:Efficiency_Factor_Gradient}
\end{figure}

\begin{enumerate}
\item In Figure \ref{Fig:Relative_Error_in_Gradient} we display the
  gradient errors for the three types of mesh generation algorithms:
  quasi-optimal a priori refinement, gradient-driven a posteriori
  refinement and energy-driven a posteriori refinement. The
  differences between the results produced by the three algorithms is
  negligable. We observe rates close to $(\#{\rm DoF})^{-1}$ for all
  three algorithms. The efficiency indicators (estimate divided by
  actual error) are displayed in Figure
  \ref{Fig:Efficiency_Factor_Gradient}. They indicate an initial
  overestimation of the error, but after a sufficient accuracy is
  reached they enter a moderate range.

\item In Figure \ref{Fig:Relative_Error_in_Energy} we show the energy
  errors for the three types of mesh generation algorithms. Once again
  the differences between the three algorithms is negligable and the
  convergence rate is, as expected, twice the rate as compared with
  the gradient errors. However, the efficiency factors plotted in
  Figure \ref{Fig:Efficiency_Factor_Energy} suggest that the constant
  prefactors in the estimators lead to a more substantial
  overestimation of the energy error.
\end{enumerate}

We can conclude that, at least in this model problem, both a
posteriori error indicators can be used to select meshes that are
quasi-optimal for both the deformation gradient and for the energy. In
order to obtain sharper estimates on actual errors (in particular the
energy error), alternative approaches such as the goal-oriented
approach \cite{Arndt2008b, Serge2007a} might be preferrable.

\begin{figure}
  \includegraphics[width=11cm, bb = 55 160 650 650]{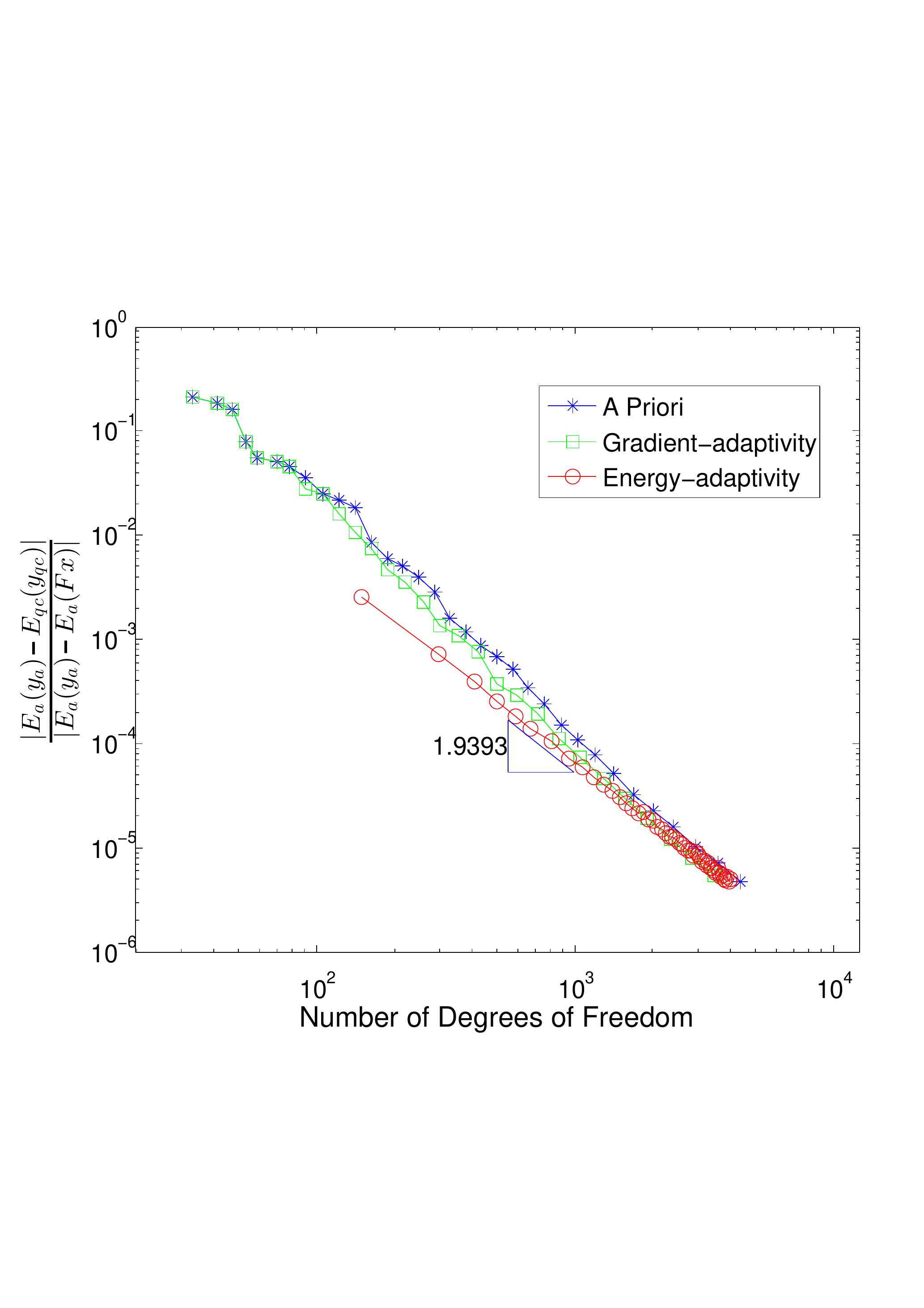}  
  \caption{Relative Error of the total energy
    \eqref{RelativeError} plotted against the number of degrees
    of freedom for three types of mesh refinements.}
  \label{Fig:Relative_Error_in_Energy}
\end{figure}

\begin{figure}
  \includegraphics[width=11cm, bb = 55 160 650 650]{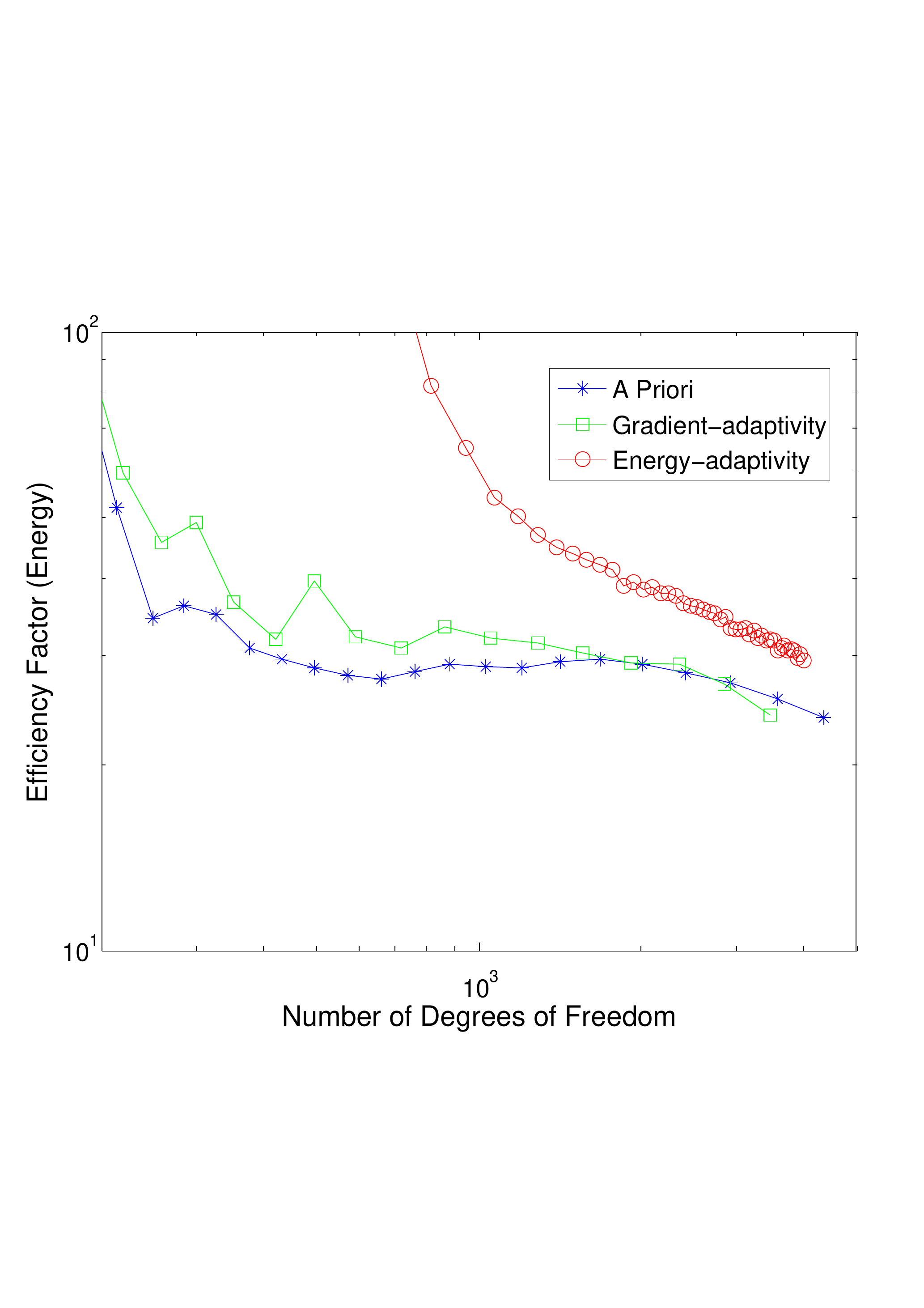}  
  \caption{Efficiency Factor of the energy error estimator (energy a
    posteriori error estimate divided by actual energy error).}
  \label{Fig:Efficiency_Factor_Energy}
\end{figure}

\section{Conclusion}
We derived a posteriori error estimators for the deformation gradient
and for the energy in a 1D atomistic chain, computed by an
atomistic-to-continuum coupling method.  Based on these estimators we
proposed two adaptive mesh refinement algorithms, which we compared to
quasi-optimal a priori mesh refinement. Our numerical experiments
indicate that the resulting meshes are again quasi-optimal.

While further work in the application relevant 2D/3D setting remains
to be done, we conclude that adaptive mesh refinement driven by
residual-based a posteriori error estimates can potentially lead to
highly efficient atomistic/continuum multiscale computations of
atomistic material defects.

\bibliographystyle{plain}
\bibliography{qc1}
\end{document}